\numberwithin{equation}{section}
\begin{document}

\newtheorem{conjecture}{Conjecture}
\newtheorem{theorem}{Theorem}[section]
\newtheorem{lemma}[theorem]{Lemma}
\newtheorem{corollary}[theorem]{Corollary}

\theoremstyle{definition}
\newtheorem{definition}[theorem]{Definition}
\newtheorem{remark}{Remark}[section]
\newtheorem{example}{Example}[section]

\newtheorem*{theorem*}{Theorem}
\newtheorem*{corollary*}{Corollary}

\title{Odd behavior in the coefficients of reciprocals of binary power series}
\author{Katherine Alexander Anders}\thanks{The author acknowledges support from National Science Foundation grant DMS 08-38434 "EMSW21-MCTP: Research Experience for Graduate Students".}
\address{Department of Mathematics, University of Texas at Tyler, 75799}
\email{kanders@uttyler.edu}
\maketitle

\begin{abstract}
Let $\mathcal{A}$ be a finite subset of $\mathbb{N}$ including $0$ and $f_\mathcal{A}(n)$ be the number of ways to write $n=\sum_{i=0}^{\infty}\epsilon_i2^i$, where $\epsilon_i\in\mathcal{A}$.  The sequence $\left(f_\mathcal{A}(n)\right) \bmod 2$ is always periodic, and $f_\mathcal{A}(n)$ is typically more often even than odd.  We give four families of sets $\left(\mathcal{A}_m\right)$ with $\left|\mathcal{A}_m\right|=4$ such that the proportion of odd $f_{\mathcal{A}_m}(n)$'s goes to $1$ as $m\to\infty$.

\end{abstract}

\section{Introduction}\label{Introduction}
\subsection{Polynomials in $\mathbb{F}_2[x]$}
For a more thorough explanation of the material in this subsection, see Section 3.1 in \cite{Lidl&N}. Note that we are only concerned with polynomials in $\mathbb{F}_2[x]$ rather than  the polynomials over more general finite fields dealt with in \cite{Lidl&N}.

\begin{lemma}[{\cite[1.46]{Lidl&N}}]
For $a,b\in\mathbb{F}_2$ and $n\in\mathbb{N}$, $(a+b)^{2^n}=a^{2^n}+b^{2^n}$.
\end{lemma}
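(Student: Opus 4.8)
The plan is to establish the identity first for $n=1$ and then bootstrap to all $n$ by induction. For the base case I would expand $(a+b)^2$ by the binomial theorem to get $a^2 + 2ab + b^2$. Since $\mathbb{F}_2$ has characteristic $2$, the middle coefficient $\binom{2}{1} = 2$ equals $0$, so the cross term $2ab$ vanishes and we are left with $(a+b)^2 = a^2 + b^2$. This single observation is the only place where the characteristic genuinely enters, and it is the crux of the whole argument.

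For the inductive step I would assume the claim for a fixed $n$, namely $(a+b)^{2^n} = a^{2^n} + b^{2^n}$, and then write $2^{n+1} = 2\cdot 2^n$ so that
\[
(a+b)^{2^{n+1}} = \left((a+b)^{2^n}\right)^2 = \left(a^{2^n} + b^{2^n}\right)^2,
\]
where the second equality is the induction hypothesis. Applying the base case once more, now to the pair $a^{2^n}$ and $b^{2^n}$, gives $\left(a^{2^n}\right)^2 + \left(b^{2^n}\right)^2 = a^{2^{n+1}} + b^{2^{n+1}}$, which completes the induction. The case $n=0$ reads $(a+b)^1 = a + b = a^1 + b^1$ and holds trivially, so it can serve as the starting point if one prefers to index from $0$.

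I do not expect any serious obstacle: the entire content is the vanishing of $\binom{2}{1}$ modulo $2$, and everything else is formal iteration of squaring. The one subtlety worth flagging is that the argument never uses that $a$ and $b$ lie specifically in $\mathbb{F}_2$ as opposed to any commutative ring of characteristic $2$; in particular the same proof applies verbatim to elements of $\mathbb{F}_2[x]$, which is exactly the setting the remainder of the section requires.
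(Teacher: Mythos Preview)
Your argument is correct and is the standard proof of the Freshman's Dream in characteristic~$2$: the base case $(a+b)^2=a^2+b^2$ follows from $\binom{2}{1}\equiv 0\pmod 2$, and induction via $2^{n+1}=2\cdot 2^n$ finishes it. Your closing observation that the proof works in any commutative ring of characteristic~$2$, and in particular in $\mathbb{F}_2[x]$, is exactly what is needed for the applications that follow.

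As for comparison with the paper: there is nothing to compare. The paper does not prove this lemma at all; it simply quotes it as \cite[1.46]{Lidl&N} and moves on. So your proposal is not merely consistent with the paper's approach --- it supplies a proof where the paper gives only a citation.
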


From this lemma and Fermat's Little Theorem, it follows that for any polynomial $f\in\mathbb{F}_2[x]$,
\begin{equation}\label{nice exponents}
f(x)^2=f(x^2) \qquad \text{and thus} \qquad f(x^{2^m})=f(x)^{2^m}. 
\end{equation}

If $f\in\mathbb{F}_2[x]$ with $f(0)\neq 0$ and $\operatorname{deg}(f)=n\geq 1$, then there exists $D\in\mathbb{Z}$ with $1\leq D\leq 2^n-1$ such that $f(x)\mid 1+x^D$.  The least such $D$ is called the \textit{order} of $f$ and is denoted $\operatorname{ord}(f(x))=\operatorname{ord}(f)$.  If $f\in\mathbb{F}_2[x]$ is an irreducible polynomial over $\mathbb{F}_2$ with $\operatorname{deg}(f)=n$, then $\operatorname{ord}(f)\mid2^n-1$.

\begin{theorem}[{\cite[3.6]{Lidl&N}}]\label{order divides}
Let $c\in\mathbb{Z}$ with $c>0$ and $f\in\mathbb{F}_2[x]$ with $f(0)\neq0$. Then $f(x)\mid 1+x^c$ if and only if $\operatorname{ord}(f)\mid c$.
\end{theorem}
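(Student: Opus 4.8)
The plan is to set $e=\operatorname{ord}(f)$ and prove the two implications separately, with the minimality defining $e$ doing the real work in the nontrivial direction.

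For the forward-easy direction I would assume $e\mid c$ and write $c=qe$. The idea is to exploit the elementary factorization $1+y\mid 1+y^q$, which holds over $\mathbb{F}_2$ since substituting $y=1$ gives $1+1=0$, so $y=1$ is a root of $1+y^q$. Substituting $y=x^e$ then yields
\begin{equation*}
1+x^e \mid 1+(x^e)^q = 1+x^c.
\end{equation*}
Since by definition of the order $f(x)\mid 1+x^e$, transitivity of divisibility gives $f(x)\mid 1+x^c$, as desired.

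For the converse I would assume $f(x)\mid 1+x^c$ and show $e\mid c$ using the division algorithm: write $c=qe+r$ with $0\le r<e$. The key is to pass to congruences modulo $f$. Because $f(0)\neq 0$, the polynomial $x$ is a unit modulo $f$, and the hypothesis $f\mid 1+x^c$ reads $x^c\equiv 1\pmod f$ (recall $-1=1$ in $\mathbb{F}_2$). From the already-established easy direction applied to $e\mid qe$, we also have $x^{qe}\equiv 1\pmod f$. Hence
\begin{equation*}
1\equiv x^c = x^{qe}\cdot x^r \equiv x^r \pmod f,
\end{equation*}
so $f(x)\mid 1+x^r$. Now I would invoke minimality: if $r$ were a positive integer less than $e$, this would contradict the fact that $e$ is the \emph{least} positive integer with $f\mid 1+x^e$. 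Therefore $r=0$ and $e\mid c$.

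I expect the only delicate point to be the converse: one must be careful that the remainder argument genuinely contradicts minimality, which requires noting $0\le r<e$ and treating the case $r=0$ (where $1+x^r=1+1=0$ is trivially divisible) as exactly the conclusion rather than a contradiction. Everything else is routine once the two auxiliary facts—the factorization $1+y\mid 1+y^q$ and the translation of divisibility into the congruence $x^c\equiv 1\pmod f$—are in place.
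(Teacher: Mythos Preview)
Your argument is correct and is the standard proof of this fact. Note, however, that the paper does not supply its own proof of this theorem: it is quoted as Theorem~3.6 from Lidl and Niederreiter and stated without proof, so there is nothing in the paper to compare against. For what it is worth, the argument in Lidl and Niederreiter proceeds along the same lines you do---the minimality of $\operatorname{ord}(f)$ combined with the division algorithm---so your write-up matches the cited source as well.
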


\begin{definition}\label{reciprocal polynomial}
For a polynomial $f(x)$ of degree $n$, the \textit{reciprocal polynomial} of $f(x)$ is $f_{(R)}(x):=x^nf(1/x)$.
\end{definition}

\begin{theorem}[{\cite[3.13]{Lidl&N}}]\label{reciprocal matches order}
If $\operatorname{ord}(f(x))=D$, then $\operatorname{ord}\left(f_{(R)}(x)\right)=D$.
\end{theorem}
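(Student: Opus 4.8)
The plan is to exploit two elementary properties of the reciprocal operation together with the divisibility criterion of Theorem~\ref{order divides}. First I would record that since $\operatorname{ord}(f)=D$ is defined we must have $f(0)\neq 0$; writing $f(x)=\sum_{i=0}^n a_i x^i$ with $a_0=f(0)\neq 0$ and $a_n\neq 0$, the reciprocal $f_{(R)}(x)=\sum_{j=0}^n a_{n-j}x^j$ again has degree $n$ and nonzero constant term $a_n$. Hence $\operatorname{ord}(f_{(R)})$ is also defined, and the goal of showing $\operatorname{ord}(f_{(R)})=D$ makes sense. The overall strategy is to prove the two divisibilities $f_{(R)}(x)\mid 1+x^D$ and then $f(x)\mid 1+x^{\operatorname{ord}(f_{(R)})}$, and to convert each into a statement about orders via Theorem~\ref{order divides}.

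The two properties I would establish are: (i) the reciprocal operation is multiplicative, $(fg)_{(R)}=f_{(R)}\,g_{(R)}$, and (ii) it is an involution on polynomials with nonzero constant term, $(f_{(R)})_{(R)}=f$. Both follow by direct computation once one keeps careful track of degrees. Multiplicativity is where the degree bookkeeping matters: over the field $\mathbb{F}_2$ there are no zero divisors, so $\deg(fg)=\deg f+\deg g$, and then
\[
(fg)_{(R)}(x)=x^{\deg f+\deg g}(fg)(1/x)=\big(x^{\deg f}f(1/x)\big)\big(x^{\deg g}g(1/x)\big)=f_{(R)}(x)\,g_{(R)}(x).
\]
The involution in (ii) is immediate from the coefficient formula above. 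I would also note the self-reciprocity $(1+x^D)_{(R)}=x^D\big(1+x^{-D}\big)=1+x^D$.

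With these in hand the theorem is quick. Since $\operatorname{ord}(f)=D$, Theorem~\ref{order divides} gives $f(x)\mid 1+x^D$, say $1+x^D=f(x)g(x)$; applying the reciprocal and using (i) together with the self-reciprocity yields $1+x^D=f_{(R)}(x)\,g_{(R)}(x)$, so $f_{(R)}(x)\mid 1+x^D$ and hence $\operatorname{ord}(f_{(R)})\mid D$ by Theorem~\ref{order divides} again. For the reverse divisibility I would run the identical argument with $f_{(R)}$ in place of $f$: if $E=\operatorname{ord}(f_{(R)})$ then $f_{(R)}\mid 1+x^E$, and taking reciprocals together with the involution (ii) gives $f=(f_{(R)})_{(R)}\mid 1+x^E$, whence $D\mid E$. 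Since $D$ and $E$ are positive integers each dividing the other, $D=E$, which is the claim. The only real obstacle is the degree accounting in (i) and (ii): one must verify that $\deg f_{(R)}=\deg f$ (equivalently that $f(0)\neq 0$), so that iterating the reciprocal returns $f$ exactly rather than $f$ multiplied by a power of $x$, and that degrees add under multiplication so that the reciprocal of a product splits cleanly.
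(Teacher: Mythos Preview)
Your proof is correct. The paper does not actually prove this theorem; it is quoted without proof as Theorem~3.13 of Lidl and Niederreiter, so there is no argument in the paper to compare against. Your approach---multiplicativity of the reciprocal operation, its involutivity on polynomials with nonzero constant term, and self-reciprocity of $1+x^D$, combined with Theorem~\ref{order divides}---is the standard one and goes through cleanly. The only point worth flagging is that in your step applying multiplicativity to $1+x^D=f(x)g(x)$ you implicitly use $g(0)\neq 0$ for the reverse step; this is automatic since evaluating at $x=0$ gives $f(0)g(0)=1$, but you may want to say so explicitly.
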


\begin{definition}\label{ell_1 definition}
For a polynomial $f(x)\in\mathbb{F}_2[x]$, define the \textit{length} of $f(x)$ to be the number of monomials in $f(x)$.  This can also be viewed as the number of terms in $f(x)$ with coefficient $1$ and is denoted by $\ell_1(f(x))$.
\end{definition}

\begin{definition}\label{ell_0 definition}
For a polynomial $f(x)\in\mathbb{F}_2[x]$, let $\ell_{0,N}(f(x))$ denote the number of terms in $f(x)$ with coefficient $0$ when $f(x)$ is viewed as a polynomial of degree $N$.  Note that $N$ may exceed $d$, the usual degree of $f(x)$, if we take all terms of the form $x^k$, where $k>d$, to have coefficient $0$.
\end{definition}

Let $f(x)\in\mathbb{F}_2[x]$ with $\operatorname{ord}(f(x))=D$, $f(0)=1$, and $f(x)f^*(x)=1+x^D$.  In \cite{KOB paper}, Cooper, Eichhorn, and O'Bryant considered fractions of the form 
\[
\gamma(f(x)):=\frac{\ell_1(f^*(x))}{D}
\]
for various polynomials $f(x)$, as did we in \cite{4 paper}.  Here we instead consider the ordered pair
\begin{equation}\label{first beta definition}
\beta(f(x)):=(\ell_1(f^*), \ell_{0,D-1}(f^*)),
\end{equation}
which gives more precise information than reduced fractions would.

\begin{definition}
We call a polynomial $f(x)$ \textit{robust} if the first coordinate of $\beta(f(x))$ exceeds the second coordinate by more than one, so $\ell_1(f^*(x))>\ell_{0,D-1}(f^*(x))+1$, where $D$ is the order of $f(x)$.  This is equivalent to saying that $\ell_1(f^*(x))>(D+1)/2$.
\end{definition}

\begin{remark}\label{2/3 remark}
Suppose $f(x)$ is not robust. If $\beta(f(x))=(1,0)$, then $\gamma(f(x))=1$.  Otherwise, $\beta(f(x))$ is of the form $(a,b)$, where $b\geq1$ and $a\leq b+1$.  Let $\theta(x)=\frac{x}{x+1}$.  Note that $\theta(x)$ is increasing for $x\geq 0$ and
\[
 \frac{a}{a+b}=\frac{\frac{a}{b}}{\frac{a}{b}+1}=\theta\left(\frac{a}{b}\right).
\]
Since $\frac{a}{b}\leq \frac{b+1}{b}=1+\frac{1}{b}\leq 2$, it follows that
\[
 \gamma(f(x))=\frac{a}{a+b}=\theta\left(\frac{a}{b}\right)\leq \theta(2)=\frac{2}{3}.
\]
Hence if $\gamma(f(x))>2/3$, then $f(x)$ is robust.
\end{remark}

In \cite{KOB paper}, Cooper, Eichhorn, and O'Bryant posed the open question of describing the set
\[
\mathcal{U}:=\{\gamma(f(x)): f \text{ is a polynomial}\}.
\]
They showed that of the $2048$ polynomials of degree less than $12$ with constant term $1$, $421$ have $\gamma(f(x))=1/2$.  They also showed by direct computation that no polynomial in $\mathbb{F}_2[x]$ of degree less than $8$ is robust.  Thus for all polynomials of degree less than $8$,$\gamma(f(x))<(\operatorname{ord}(f)+1)/2$.  In fact, if $\operatorname{deg}(f(x))<8$ and $\gamma(f(x))>1/2$, then $\gamma(f(x))=m/(2m-1)$ for some $m\in\mathbb{N}$.

Using the notation of \cite{KOB paper}, for a given positive integer $n$, let $P_n$ denote the polynomial in $\mathbb{F}_2[x]$ whose exponents are the powers of 2 in the binary representation of $n$.  This enumerates $\mathbb{F}_2[x]$.  For example, $11=[1011]_2$, so $P_{11}(x)=x^3+x+1$.

Figure \ref{PolynomialReciprocalDensity} was taken from \cite{KOB paper} with permission and gives a dot plot of all points of the form $\left(n,\gamma\left(P_n\right)\right)$ for $n$ odd and less than $2^{12}$.  The points are tightly clustered around $1/2$, but when they stray from $1/2$, there is a strong propensity to be smaller than $1/2$ rather than greater.  Note the four points near the top represented by boxes.  We will explain and generalize these robust polynomials in Section \ref{Families of Robust Polynomials}.

\begin{figure}[h!]
	\begin{center}
		\includegraphics[width=0.75\textwidth]{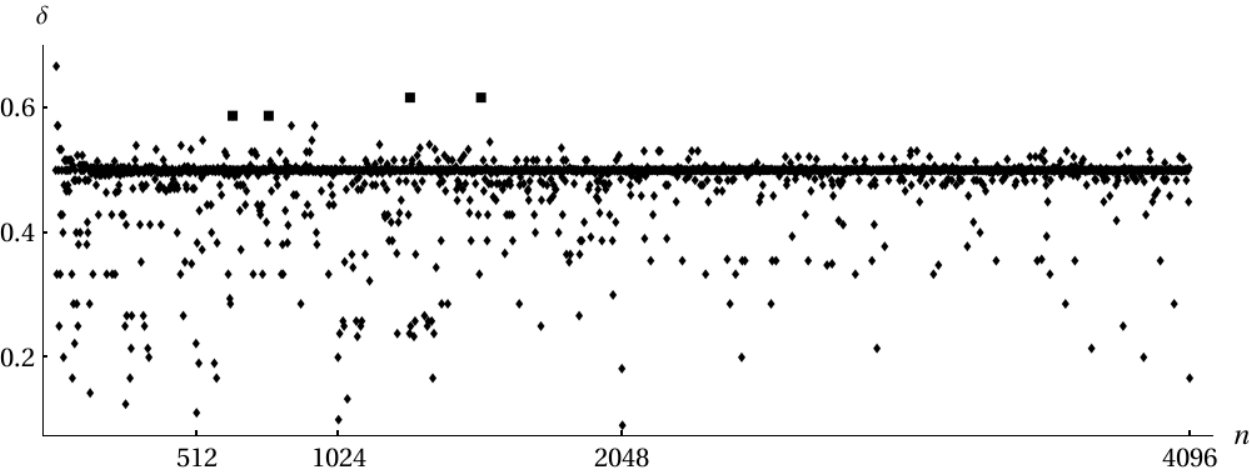}
	\end{center}
\caption{The points $\left(n,\gamma\left(P_n\right)\right)$ with $n$ odd, except $(1,0)$ and $(3,1)$, taken from \cite{KOB paper}}\label{PolynomialReciprocalDensity}
\end{figure}

Since $1+x^D$ has order $D$ and $\ell_1\left(\left(1+x^D\right)^*\right)=\ell_1(1)=1$, we see $\gamma\left(1+x^D\right)=1/D$ and the lower bound of $\mathcal{U}$ is $0$.

The main results of this paper establish $1$ as the supremum of $\mathcal{U}$.  They will be shown in Section \ref{Families of Robust Polynomials} and are summarized below.  Section \ref{Applications} discusses applications of the main results to generalized binary representations, and Section \ref{Open Questions}  is a discussion of open questions on these problems and areas for future work.

\begin{theorem*}
The polynomials $f_{r,1}(x)=1+x+x^{2^r-1}+x^{2^r+1}$ are robust with order dividing $4^r-1$ and $\displaystyle\lim_{r\to\infty} \gamma\left(f_{r,1}\right)=1$.
\end{theorem*}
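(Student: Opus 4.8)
The plan is to begin by factoring $f_{r,1}$. Since $1+x^2=(1+x)^2$ by \eqref{nice exponents}, grouping the even and odd contributions gives
\[
f_{r,1}(x)=(1+x)+x^{2^r-1}(1+x)^2=(1+x)\bigl(1+x^{2^r-1}+x^{2^r}\bigr).
\]
Write $g(x)=1+x^{2^r-1}+x^{2^r}$. To control the order I would pass to the reciprocal $g_{(R)}(x)=1+x+x^{2^r}$: if $\alpha$ is any root, then $\alpha^{2^r}=\alpha+1$, and squaring $r$ more times (the Frobenius endomorphism is additive in characteristic two) gives $\alpha^{2^{2r}}=(\alpha+1)^{2^r}=\alpha^{2^r}+1=\alpha$, so $\alpha^{4^r-1}=1$. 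As $g_{(R)}$ is separable (its derivative is $1$), every root is a $(4^r-1)$-st root of unity, so $g_{(R)}\mid 1+x^{4^r-1}$; Theorem \ref{order divides} then gives $\operatorname{ord}(g_{(R)})\mid 4^r-1$, and Theorem \ref{reciprocal matches order} transfers this to $g$. Since $(1+x)\mid 1+x^{4^r-1}$ and $\gcd(1+x,g)=1$ (as $g(1)=1$), we get $f_{r,1}\mid 1+x^{4^r-1}$, whence $\operatorname{ord}(f_{r,1})\mid 4^r-1$ by Theorem \ref{order divides}.

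Next I would translate $\gamma$ into a statement about power series. Writing $D=\operatorname{ord}(f_{r,1})$, the coefficients of $f_{r,1}^*$ form exactly one period of the purely periodic series $1/f_{r,1}\in\mathbb{F}_2[[x]]$, so $\gamma(f_{r,1})=\ell_1(f_{r,1}^*)/D$ is the asymptotic density of $1$'s among the coefficients of $1/f_{r,1}$. From the factorization, $1/f_{r,1}=1/(1+x)+x^{2^r-1}/g$, and since $1/(1+x)=\sum_{n\ge0}x^n$ has every coefficient equal to $1$, the $n$-th coefficient of $1/f_{r,1}$ vanishes precisely when the $(n-2^r+1)$-st coefficient of $1/g$ equals $1$. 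Because density is shift-invariant, the density of $0$'s of $1/f_{r,1}$ equals the density of $1$'s of $1/g$, that is, $1-\gamma(f_{r,1})=\gamma(g)$. It therefore suffices to prove $\gamma(g)\to0$.

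For the last step I would expand $1/g=\sum_{k\ge0}x^{k(2^r-1)}(1+x)^k$, using $x^{2^r-1}+x^{2^r}=x^{2^r-1}(1+x)$. The $k$-th term is supported on the window $[k(2^r-1),\,k2^r]$ and contributes $2^{w(k)}$ odd coefficients, where $w(k)$ is the number of $1$'s in the binary expansion of $k$. These windows are pairwise disjoint for $k\le 2^r-2$ and first meet only at degree $2^r(2^r-1)$; hence there is no cancellation below that degree, and the identity $\sum_{k=0}^{2^r-1}2^{w(k)}=3^r$ yields exactly $3^r-1$ odd coefficients of $1/g$ among degrees $0,\dots,2^r(2^r-1)-1$. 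Since $D\mid 4^r-1$ forces $D\le (2^r)^2-1$, one full period sits inside this cancellation-free range, so $\ell_1(g^*)\le 3^r-1$. Feeding the exact finite count and this bound into the elementary estimate $\lvert A(N)/(N+1)-\gamma(g)\rvert\le \ell_1(g^*)/(N+1)$ for the counting function $A$ of a period-$D$ binary sequence, evaluated at $N=2^r(2^r-1)-1$, gives $\gamma(g)\le 2(3^r-1)/(4^r-2^r)\to0$. Thus $\gamma(f_{r,1})=1-\gamma(g)\to1$, and in particular $\gamma(f_{r,1})>2/3$ for all large $r$, so $f_{r,1}$ is robust by Remark \ref{2/3 remark} (the finitely many remaining small values of $r$ being checked directly).

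I expect the main obstacle to be precisely that we never learn $D$ exactly, only that $D\mid 4^r-1$; controlling the cancellation in $1/g$ without pinning down the period is the crux. The two structural facts — that overlapping windows first occur only at degree $2^r(2^r-1)$, and that $D\le 4^r-1$ — must be combined so that a full period is guaranteed to lie inside the cancellation-free range. That is what makes the clean bound $\ell_1(g^*)\le 3^r-1$ available and what lets a finite-window average be promoted to the true density $\gamma(g)$.
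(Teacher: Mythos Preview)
Your approach closely parallels the paper's but with two genuinely different ingredients. For the order, the paper proves an explicit product identity (its Lemma~\ref{in general lemma}) to exhibit $(1+x^{4^r-1})/g$ as a polynomial, whereas you argue via Frobenius on the roots of the reciprocal $1+x+x^{2^r}$; your route is shorter and more conceptual. For the count, both proofs arrive at the same geometric-series expansion $\sum_k x^{(2^r-1)k}(1+x)^k$ and the same Glaisher sum $\sum_{k<2^r}2^{b(k)}=3^r$, but the paper reaches it by rewriting the explicit polynomial $h_{r,1}=(1+x^{4^r-1})/f_{r,1}$ as ``all-ones plus $S_{r,1}$'', while you reach it through the partial-fraction identity $1/f_{r,1}=1/(1+x)+x^{2^r-1}/g$ and the relation $1-\gamma(f_{r,1})=\gamma(g)$. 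The paper's route yields the exact value $\gamma(f_{r,1})=(4^r-3^r)/(4^r-1)$; yours, as written, gives only an upper bound on $\gamma(g)$, which still suffices for the limit and for robustness via Remark~\ref{2/3 remark}.

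There is one genuine gap in your window argument. You assert that ``$D\mid 4^r-1$ forces $D\le(2^r)^2-1$, so one full period sits inside the cancellation-free range $[0,\,4^r-2^r-1]$.'' But $4^r-1>4^r-2^r$, and in fact the expected (and, per the paper, computationally verified for $r\le10$) case is exactly $D=4^r-1$, so a period of length $D$ does \emph{not} fit in that range; your deduction $\ell_1(g^*)\le 3^r-1$ is therefore unjustified as stated. The repair is immediate: since $\deg g^*=D-\deg g=D-2^r\le 4^r-2^r-1$, all nonzero coefficients of one period of $1/g$ already lie in the cancellation-free window, whence $\ell_1(g^*)$ equals the number of ones there, namely $3^r-1$. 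This one-line fix not only closes the gap but upgrades your inequality to the exact value $\gamma(g)=(3^r-1)/(4^r-1)$, matching the paper.
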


\begin{corollary*}
The reciprocal polynomials $f_{(R),r,1}(x)=1+x^2+x^{2^r}+x^{2^r+1}$ are robust with order dividing $4^r-1$ and $\displaystyle\lim_{r\to\infty} \gamma\left(f_{(R),r,1}\right)=1$.
\end{corollary*}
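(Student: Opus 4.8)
The plan is to handle the order and the density separately, reducing each to a clean combinatorial statement. \emph{For the order,} first I would factor
\[ f_{r,1}(x) = (1+x)\bigl(1 + x^{2^r-1} + x^{2^r}\bigr), \]
writing $g_r(x) = 1 + x^{2^r-1} + x^{2^r}$, whose reciprocal is $g_r^{(R)}(x) = x^{2^r} + x + 1$. Since $g_r(1) = 1$, the factors $1+x$ and $g_r$ are coprime, so it suffices to show each divides $1 + x^{4^r-1}$; by Theorem~\ref{order divides} this gives $\operatorname{ord}(f_{r,1}) \mid 4^r - 1$. The factor $1+x$ divides trivially, and for $g_r$ I would pass to its reciprocal (Theorems~\ref{order divides} and~\ref{reciprocal matches order}), reducing to $g_r^{(R)} \mid 1 + x^{4^r-1}$. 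A root $\alpha$ of $g_r^{(R)}$ satisfies $\alpha^{2^r} = \alpha + 1$, and squaring $r$ further times gives $\alpha^{4^r} = (\alpha+1)^{2^r} = \alpha^{2^r} + 1 = \alpha$, so $\alpha \in \mathbb{F}_{4^r}$ and $\alpha^{4^r-1} = 1$. As $g_r^{(R)}$ has derivative $1$ it is separable, hence divides $1 + x^{4^r-1}$, settling the order claim.

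\emph{For the density,} set $N = 4^r - 1$ and $q(x) = (1+x^N)/f_{r,1}(x)$, a polynomial of degree $< N$; because the true period $D = \operatorname{ord}(f_{r,1})$ divides $N$, the proportion of odd coefficients among $a_0,\dots,a_{N-1}$, where $1/f_{r,1} = \sum_{n\ge0} a_n x^n$, equals $\ell_1(f^*)/D = \gamma(f_{r,1})$. The engine of the computation is the expansion
\[ \frac{1}{f_{r,1}(x)} = \frac{1}{1+x}\cdot\frac{1}{1 + x^{2^r-1}(1+x)} = \sum_{k\ge0} x^{k(2^r-1)}(1+x)^{k-1}, \]
which gives $a_n \equiv 1 + \sum_{k\ge1}\binom{k-1}{\,n-k(2^r-1)} \pmod 2$, the leading $1$ coming from the $k=0$ term.

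The crux, and the step I expect to demand the most care, is a \emph{uniqueness} observation: for each $n \le N-1$ at most one $k$ contributes a nonzero binomial, since the constraints $k(2^r-1) \le n \le k2^r - 1$ pin $k$ to a real interval of length $(n+1-2^r)/\bigl((2^r-1)2^r\bigr)$, which is less than $1$ exactly when $n < N$. Consequently $a_n = 0$ precisely when the unique admissible $k$ has $\binom{k-1}{\,n-k(2^r-1)}$ odd, so the ``bad'' indices $n$ are in bijection with pairs $(k,j)$ satisfying $0 \le j \le k-1$, $\binom{k-1}{j}$ odd, and $k(2^r-1)+j \le N-1$.

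It then remains to count these pairs. By Lucas' theorem row $m$ of Pascal's triangle contains $2^{s(m)}$ odd entries, where $s(m)$ is the binary digit sum, and every row with $k-1 \le 2^r-2$ lies entirely below the cutoff $k(2^r-1)+j \le N-1$. Evaluating $\sum_{m=0}^{2^r-1} x^{s(m)} = (1+x)^r$ at $x=2$ gives $\sum_{m=0}^{2^r-1} 2^{s(m)} = 3^r$, so with $s(2^r-1) = r$ the complete rows contribute $3^r - 2^r$ and the single truncated row ($k = 2^r$) adds at most $2^r$; hence the number of bad indices is at most $3^r$, giving $\gamma(f_{r,1}) \ge 1 - 3^r/(4^r-1) \to 1$ and proving the limit. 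For robustness, note $D \ge \deg f_{r,1} = 2^r+1 \to \infty$, so the threshold $(D+1)/(2D)$ tends to $1/2$ while the density tends to $1$; the elementary inequality $1 - 3^r/(4^r-1) > \tfrac12 + 1/\bigl(2(2^r+1)\bigr)$ then forces $\ell_1(f^*) > (D+1)/2$ for all $r \ge 3$, the smaller cases being checked directly. Finally the Corollary is immediate, since reciprocation preserves the order (Theorem~\ref{reciprocal matches order}) and merely reverses the coefficient string of $f^*$, leaving $\ell_1(f^*)$ and hence $\gamma$ unchanged.
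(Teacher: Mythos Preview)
Your proposal is correct, and its core---the expansion $1/f_{r,1} = \sum_{k\ge 0} x^{k(2^r-1)}(1+x)^{k-1}$ together with Lucas' theorem to count odd binomial coefficients---matches the paper's proof of Theorem~\ref{maintheorem}, after which the Corollary follows via Lemma~\ref{robustness of reciprocals} exactly as you say. Two differences are worth recording. For the order claim, the paper builds an explicit cofactor using the telescoping identity of Lemma~\ref{a,b lemma} (yielding Lemma~\ref{in general lemma}), whereas you argue directly that every root of $x^{2^r}+x+1$ lies in $\mathbb{F}_{4^r}$ via Frobenius; your route is shorter and more conceptual, while the paper's explicit cofactor is what sets up the length computation. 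For the count, the paper obtains the \emph{exact} value $\ell_1(h_{r,1}) = 4^r - 3^r$ (equivalently $3^r-1$ vanishing coefficients among $4^r-1$) by tracking the truncated row $k=2^r$ precisely---all $2^r$ entries of Pascal row $2^r-1$ are odd and exactly one of them, at $n=4^r-1$, falls outside the range---whereas you settle for the upper bound $3^r$. The exact count lets the paper verify robustness through the cleaner criterion $\gamma > 2/3$ of Remark~\ref{2/3 remark}; your degree-based lower bound on $D$ works too but is a little more delicate.
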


\begin{theorem*}
The polynomials $f_{r,2}(x)=1+x+x^{2^r}+x^{2^r+2}$ are robust with order dividing $4^r+2^r+1$ and $\displaystyle\lim_{r\to\infty} \gamma\left(f_{r,2}\right)=1$.
\end{theorem*}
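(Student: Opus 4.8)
The plan is to first factor $f_{r,2}$ over $\mathbb{F}_2$, then convert the computation of $\gamma$ into a count of odd coefficients of a reciprocal power series, which I evaluate using Lucas' theorem. Working in characteristic $2$ and using $(1+x)^2 = 1+x^2$, I would write
\[
f_{r,2}(x) = (1+x) + x^{2^r}(1+x)^2 = (1+x)\bigl(1 + x^{2^r} + x^{2^r+1}\bigr),
\]
and set $g_r(x) = 1 + x^{2^r} + x^{2^r+1}$ and $D = 4^r + 2^r + 1$. To show $\operatorname{ord}(f_{r,2}) \mid D$ it suffices, by Theorem \ref{order divides}, to prove $f_{r,2}(x) \mid 1 + x^D$. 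Since $(1+x) \mid 1+x^D$ always and $1+x^D$ is separable ($D$ being odd), I only need each root $\alpha$ of $g_r$ to satisfy $\alpha^D = 1$, together with separability of $g_r$. The latter is immediate because $g_r'(x) = x^{2^r}$ is coprime to $g_r$ (as $g_r(0)=1$). For the former, a root $\alpha$ satisfies $\alpha^{2^r} = 1/(1+\alpha)$; applying the Frobenius relation \eqref{nice exponents} gives $\alpha^{4^r} = 1/(1+\alpha^{2^r}) = (1+\alpha)/\alpha$, and multiplying yields $\alpha^D = \alpha^{4^r}\alpha^{2^r}\alpha = 1$.

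Next I would reduce $\gamma(f_{r,2})$ to a coefficient count. A short partial-fraction computation in $\mathbb{F}_2[[x]]$ gives
\[
\frac{1}{f_{r,2}(x)} = \frac{1}{1+x} + \frac{x^{2^r}}{g_r(x)},
\]
which one verifies by clearing denominators, since the numerator collapses to $1$. Writing $\tfrac{1}{f_{r,2}} = \sum_m c_m x^m$ and $\tfrac{1}{g_r} = \sum_m b_m x^m$ and using $\tfrac{1}{1+x} = \sum_m x^m$, this says $c_m = 1 + b_{m-2^r}$ for every $m$. Counting over a window of length $D$, which is a common multiple of all the relevant orders so that the count equals the density times $D$, the number of odd $c_m$ per period equals $D$ minus the number of odd $b_m$ per period. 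Hence $\gamma(f_{r,2}) = 1 - \gamma(g_r)$, and everything comes down to counting the odd coefficients of $1/g_r$ in one period.

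The crux is this last count. Expanding as a geometric series and applying Lucas' theorem,
\[
\frac{1}{g_r(x)} = \sum_{k\ge 0} x^{k2^r}(1+x)^k = \sum_{k\ge 0}\ \sum_{j\preceq k} x^{k2^r+j},
\]
where $j \preceq k$ means the binary digits of $j$ form a subset of those of $k$. I would analyze the exponents $k2^r+j$ by location. For $m < 4^r$ the representation $m = k2^r+j$ with $j \preceq k$ is forced to have $k = \lfloor m/2^r\rfloor < 2^r$ and $j = m \bmod 2^r$, so it is unique and $b_m = [\,j \preceq k\,]$; summing gives $\sum_{m<4^r} b_m = \sum_{k=0}^{2^r-1} 2^{s(k)} = 3^r$, where $s(k)$ denotes the number of $1$'s in the binary expansion of $k$. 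One then checks the boundary term $b_{4^r} = 1$ and, by a direct collision analysis, that $b_m = 0$ for every $m$ in the short range $4^r < m < D$. Thus exactly $3^r+1$ coefficients of $1/g_r$ are odd per period, so $\gamma(g_r) = (3^r+1)/D$ and
\[
\gamma(f_{r,2}) = 1 - \frac{3^r+1}{4^r+2^r+1} = \frac{4^r + 2^r - 3^r}{4^r+2^r+1} \longrightarrow 1 \quad (r \to \infty).
\]
In particular $\gamma(f_{r,2}) > 2/3$ once $r$ is large, so $f_{r,2}$ is robust for all sufficiently large $r$ by Remark \ref{2/3 remark}, with the explicit formula pinning down the exact threshold. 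I expect the main obstacle to be the collision analysis in the crux step: verifying that no cancellations disturb the clean count $3^r$ for $m < 4^r$ and, especially, that every $m$ in the top range $4^r < m < D$ yields an even (hence vanishing) coefficient, since these are precisely the exponents where distinct Sierpinski pairs $(k,j)$ can coincide.
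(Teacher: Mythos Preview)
Your argument is correct in its essentials and reaches exactly the paper's formula
\[
\gamma(f_{r,2})=\frac{4^r-3^r+2^r}{4^r+2^r+1},
\]
but the packaging differs from the paper's in two places. For the order, you argue root-by-root via Frobenius (a clean route), whereas the paper writes down an explicit polynomial identity, using Lemma~\ref{a,b lemma} to obtain $\bigl(1+x^{2^r}+x^{2^r+1}\bigr)\prod_{j=0}^{r-1}\bigl(1+x^{2^{r+j}}+x^{(2^r+1)2^j}\bigr)=1+x^{4^r}+x^{4^r+2^r}$ and then twisting by $x^{4^r}$. For the count, you use the partial-fraction splitting $\tfrac{1}{f_{r,2}}=\tfrac{1}{1+x}+\tfrac{x^{2^r}}{g_r}$ and analyze the coefficients of $1/g_r$ directly; the paper instead builds the cofactor $h_{r,2}(x)=\tfrac{1+x^{4^r}}{1+x}+\sum_{i=1}^{2^r-1}x^{2^ri}(1+x)^{i-1}$ and counts its monomials. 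Both routes rest on the same expansion $\sum_i x^{2^ri}(1+x)^i$ and the same Lucas/Glaisher count $\sum_{k=0}^{2^r-1}2^{s(k)}=3^r$, so the underlying combinatorics is identical. Your collision analysis at the top of the window (the cases $m=4^r$, $4^r<m<D$, with the lone cancellation at $m=4^r+2^r$ between $(k,j)=(2^r,2^r)$ and $(2^r{+}1,0)$) is exactly what is needed and replaces the paper's ``no overlap between consecutive $i$'' observation.

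One genuine loose end: invoking Remark~\ref{2/3 remark} only yields robustness once $\gamma(f_{r,2})>\tfrac23$, which first happens at $r=4$ (indeed $\gamma(f_{3,2})=45/73<\tfrac23$). The paper closes this by noting that for $r=3,4,5$ one has $\gcd(c_{r,2},d_{r,2})=1$, so by Lemma~\ref{exact order unnecessary} the order is exactly $4^r+2^r+1$ and $\beta(f_{r,2})=(c_{r,2},d_{r,2})$, whence robustness follows directly from $c_{r,2}>d_{r,2}+1$. You should add this small check (for $r=3$ it is immediate since $73$ is prime) rather than leaving it to ``the explicit formula pinning down the exact threshold.''
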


\begin{corollary*}
The reciprocal polynomials $f_{(R),r,2}(x)=1+x^2+x^{2^r+1}+x^{2^r+2}$ are robust with order dividing $4^r+2^r+1$ and $\displaystyle\lim_{r\to\infty} \gamma\left(f_{(R),r,2}\right)=1$.
\end{corollary*}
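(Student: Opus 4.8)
The plan is to deduce this corollary directly from the preceding theorem on $f_{r,2}$ by exploiting the fact that passing to the reciprocal polynomial changes neither the order nor the relevant combinatorial datum $\ell_1$ of the associated cofactor. Writing $f=f_{r,2}$, the first step is routine bookkeeping: a direct computation of $x^{\deg f}f(1/x)$ with $\deg f = 2^r+2$ shows that the stated polynomial $f_{(R),r,2}=1+x^2+x^{2^r+1}+x^{2^r+2}$ is exactly the reciprocal $f_{(R)}$ of $f_{r,2}$ in the sense of Definition \ref{reciprocal polynomial}. Theorem \ref{reciprocal matches order} then gives $\operatorname{ord}(f_{(R)})=\operatorname{ord}(f)$, and since the theorem asserts that this divides $4^r+2^r+1$, the order claim is settled.

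Next I would relate the two cofactors. Let $D$ be the common order and let $f^*$ be the cofactor satisfying $ff^*=1+x^D$ with $f(0)=f^*(0)=1$. The reciprocal operation is multiplicative, $(gh)_{(R)}=g_{(R)}h_{(R)}$ whenever $g(0),h(0)\neq0$, and $1+x^D$ is self-reciprocal. Applying the reciprocal to $ff^*=1+x^D$ therefore yields $f_{(R)}\cdot(f^*)_{(R)}=1+x^D$, so by uniqueness the cofactor of $f_{(R)}$ is precisely $(f^*)_{(R)}$. Because reversing the coefficient sequence of a polynomial preserves the number of nonzero coefficients, we get $\ell_1\bigl((f^*)_{(R)}\bigr)=\ell_1(f^*)$.

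Finally I would assemble the conclusion. Since both $\gamma$ and the robustness criterion depend only on $\ell_1$ of the cofactor and on $D$, and since neither quantity changes under $f\mapsto f_{(R)}$, it follows that $\gamma(f_{(R),r,2})=\gamma(f_{r,2})$ and that $f_{(R),r,2}$ is robust exactly when $f_{r,2}$ is. The theorem supplies the robustness of $f_{r,2}$ and the limit $\lim_{r\to\infty}\gamma(f_{r,2})=1$, and both transfer verbatim to the reciprocal family.

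The one point requiring care—the nearest thing to an obstacle—is confirming that $(f^*)_{(R)}$ really is the canonical cofactor, i.e.\ that its constant term equals $1$ and its degree equals $D-\deg f$, so that the multiplicativity argument genuinely identifies it with the cofactor of $f_{(R)}$. This follows from $f^*(0)=1$ (forced by evaluating $ff^*=1+x^D$ at $x=0$) together with the degree identity $\deg f+\deg f^*=D$; once these are verified, the equality $\gamma(f_{(R),r,2})=\gamma(f_{r,2})$ and hence the corollary are immediate.
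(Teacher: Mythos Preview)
Your proposal is correct and follows essentially the same approach as the paper: the paper simply cites Theorem~\ref{Second Main Theorem} together with Lemma~\ref{robustness of reciprocals}, whose proof establishes exactly the identity $(f^*)_{(R)}=(f_{(R)})^*$ (and hence $\beta(f)=\beta(f_{(R)})$) that you derive here inline. In effect you have reproduced the content of Lemma~\ref{robustness of reciprocals} inside the corollary's proof rather than invoking it as a black box.
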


\subsection{Acknowledgements}
The results in this paper were part of the author's doctoral dissertation at the University of Illinois at Urbana-Champaign.  The author expresses appreciation to Joshua Cooper, Dennis Eichhorn, and Kevin O'Bryant for permission to adapt Figure \ref{PolynomialReciprocalDensity} from their paper \textit{Reciprocals of binary power series}, \cite{KOB paper}.  The author also wishes to thank Professor Bruce Reznick for his time, ideas, and encouragement.

\section{Families of Robust Polynomials}\label{Families of Robust Polynomials}
In this section, we present four sequences $\{f_n\}$ of polynomials such that
\[
\lim_{n\to\infty}\gamma\left(f_n\right)=1, 
\]
thereby establishing $1$ as the least upper bound of the set $\mathcal{U}$.  We then consider examples of elements of these sequences.  At the end of the section, we discuss the methods of data collection used in finding these and other examples of robust polynomials.  All polynomials in this subsection are considered as elements of $\mathbb{F}_2[x]$.  

Equation \eqref{first beta definition} defined $\beta(f(x))$, and we now define the more general ordered pair $\beta_N(f(x))$.

\begin{definition}
For $f(x)\in\mathbb{F}_2[x]$ and $N$ a multiple of the order of $f(x)$ with $f^*(x):=(1+x^N)/f(x)$, we define
\[
\beta_N(f(x))=\left(\ell_1(f^*(x), \ell_{0,N-1}(f^*(x))\right).
\]
\end{definition}

\begin{lemma}\label{robustness of reciprocals}
In $\mathbb{F}_2[x]$, $\beta(f(x))=\beta\left(f_R(x)\right)$, and the robustness of $f(x)$ is equivalent to the robustness of $f_R(x)$.
\end{lemma}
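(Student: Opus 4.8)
The plan is to reduce the whole statement to one clean identity, namely that the reciprocal of the cofactor equals the cofactor of the reciprocal: $f_R^* = (f^*)_R$. Since $\beta$ is built entirely out of $\ell_1(f^*)$ and $\ell_{0,D-1}(f^*)$, and these two counts are insensitive to reversing the string of coefficients, this identity will immediately force $\beta(f)=\beta(f_R)$. Throughout write $D=\operatorname{ord}(f)$, $n=\operatorname{deg}(f)$, and recall $f f^* = 1+x^D$.

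First I would record two elementary facts about reciprocation on $\mathbb{F}_2[x]$. Because $\mathbb{F}_2[x]$ is an integral domain, $\operatorname{deg}(gh)=\operatorname{deg}(g)+\operatorname{deg}(h)$, and a direct expansion of $x^{\operatorname{deg}(g)+\operatorname{deg}(h)}(gh)(1/x)$ shows that reciprocation is multiplicative, $(gh)_R=g_R h_R$; and $1+x^D$ is self-reciprocal, since $x^D(1+x^{-D})=1+x^D$. Applying reciprocation to $f f^*=1+x^D$ then gives $f_R (f^*)_R=(1+x^D)_R=1+x^D$. On the other hand, Theorem \ref{reciprocal matches order} gives $\operatorname{ord}(f_R)=D$, while $f_R(0)$ is the leading coefficient of $f$, hence equal to $1$; so $f_R\mid 1+x^D$ and by definition $f_R f_R^*=1+x^D$. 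Comparing the two factorizations and cancelling the nonzero factor $f_R$ in the integral domain yields the key identity $f_R^*=(f^*)_R$.

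It then remains only to read off the two coordinates of $\beta$. Reciprocation merely reverses the list of coefficients, so it preserves the number of nonzero (i.e.\ equal to $1$) coefficients; hence $\ell_1(f_R^*)=\ell_1((f^*)_R)=\ell_1(f^*)$. For the second coordinate, note that both $f^*$ and $f_R^*$ have degree $D-n\le D-1$ (since $n\ge 1$), so each occupies exactly the $D$ coefficient slots $x^0,\dots,x^{D-1}$, whence $\ell_{0,D-1}(g)=D-\ell_1(g)$. Combining, $\ell_{0,D-1}(f_R^*)=D-\ell_1(f_R^*)=D-\ell_1(f^*)=\ell_{0,D-1}(f^*)$, so $\beta(f)=\beta(f_R)$. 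Since robustness is precisely the inequality $\ell_1(f^*)>\ell_{0,D-1}(f^*)+1$, a condition depending only on $\beta$, the robustness of $f$ and of $f_R$ coincide.

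I would expect the only real care to lie in the bookkeeping: confirming that reciprocation is genuinely multiplicative (this uses the integral-domain property so the degree of a product splits as the sum of degrees) and that $f_R$ really divides $1+x^D$ with $f_R(0)\ne 0$, so that $f_R^*$ is well-defined and the cancellation above is legitimate. Everything past the identity $f_R^*=(f^*)_R$ is just the observation that the $\beta$-data is an invariant of the multiset of coefficients, which reversal does not disturb.
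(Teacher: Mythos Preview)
Your proof is correct and follows essentially the same path as the paper: both arguments reduce everything to the single identity $(f^*)_R=(f_R)^*$ and then read off that $\ell_1$ (hence $\beta$) is invariant under coefficient reversal. The only cosmetic difference is in how that identity is obtained---the paper computes $x^{D-n}f^*(1/x)$ directly, while you invoke the multiplicativity of reciprocation together with the self-reciprocity of $1+x^D$; these are two phrasings of the same calculation.
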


\begin{proof}
According to Theorem \ref{reciprocal matches order}, if $\operatorname{order}(f(x))=D$, then $\operatorname{order}\left(f_R(x)\right)=D$.  Suppose $\operatorname{deg}(f(x))=n$.  Then we have
\[
f(x)f^*(x)=1+x^D
\]
and
\[
f_{(R)}(x)\left(f_{(R)}\right)^*(x)=1+x^D,
\]
where $\operatorname{deg}(f^*(x))=\operatorname{deg}\left(\left(f_{(R)}\right)^*(x)\right)=D-n$.

Now 
\begin{align*}
\left(f^*\right)_{(R)}(x)&=x^{D-n}f^*\left(\frac{1}{x}\right)=x^{D-n}\left(\frac{1+\left(\frac{1}{x}\right)^D}{f\left(\frac{1}{x}\right)}\right)\\
&=\frac{x^D\left(1+\frac{1}{x^D}\right)}{x^nf\left(\frac{1}{x}\right)}=\frac{1+x^D}{f_{(R)}(x)}\\
&=\left(f_{(R)}\right)^*(x).
\end{align*}
Thus there is no ambiguity in writing $f_{(R)}^*(x)$, and $\ell_1\left(f^*(x)\right)=\ell_1\left(f_{(R)}^*(x)\right)$, so we see that $\beta(f(x))=\beta\left(f_R(x)\right)$.
\end{proof}

\begin{lemma}\label{exact order unnecessary}
If $f(x),g(x),h(x)\in\mathbb{F}_2[x]$ satisfy $f(x)g(x)=1+x^N$ and $f(x)h(x)=1+x^M$, where $N<M$, then $\ell_1(g(x))/N=\ell_1(h(x))/M$.  In particular, if $\ell_1(g(x))/N$ is in lowest terms, then $N$ is the order of $f(x)$.
\end{lemma}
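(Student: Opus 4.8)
The plan is to reduce both products to the order $D:=\operatorname{ord}(f)$ and to exploit the fact that $1+x^{kD}$ factors as $1+x^D$ times a ``sparse'' polynomial whose nonzero terms are spaced exactly $D$ apart. First I would observe that $f(0)=1$ automatically, since $f(0)g(0)=1$ in $\mathbb{F}_2$, and (as the statement concerns the order) that $\deg f\ge 1$, so that $D$ is defined. The two hypotheses say $f\mid 1+x^N$ and $f\mid 1+x^M$, so Theorem \ref{order divides} gives $D\mid N$ and $D\mid M$. Writing $N=Ds$ and $M=Dt$ and setting $g_0:=(1+x^D)/f$, the task becomes to express $\ell_1(g)$ and $\ell_1(h)$ in terms of $\ell_1(g_0)$.

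The central computation is the identity $1+x^{Ds}=(1+x^D)\sum_{i=0}^{s-1}x^{iD}$ in $\mathbb{F}_2[x]$, which holds because all the intermediate terms of the product cancel in characteristic $2$. Dividing through by $f$ yields $g=\sum_{i=0}^{s-1}x^{iD}g_0(x)$, and the analogous identity with $t$ in place of $s$ gives $h=\sum_{i=0}^{t-1}x^{iD}g_0(x)$.

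The key step, and the one I expect to be the crux, is showing that these shifted copies of $g_0$ never overlap, so that $\ell_1$ is \emph{exactly} additive rather than merely subadditive. This is precisely where $D$ being the order (together with $\deg f\ge 1$) enters: one has $\deg g_0=D-\deg f\le D-1$, so each summand $x^{iD}g_0(x)$ has all of its exponents confined to the block $\{iD,iD+1,\dots,iD+D-1\}$, and these blocks are pairwise disjoint as $i$ varies. Hence no cancellation can occur when the sum is formed, and $\ell_1(g)=s\,\ell_1(g_0)$; the same reasoning gives $\ell_1(h)=t\,\ell_1(g_0)$.

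Finally, combining these counts yields
\[
\frac{\ell_1(g)}{N}=\frac{s\,\ell_1(g_0)}{sD}=\frac{\ell_1(g_0)}{D}=\frac{t\,\ell_1(g_0)}{tD}=\frac{\ell_1(h)}{M},
\]
which is the first assertion. For the ``in particular'' clause, note that $g_0\neq 0$ forces $\ell_1(g_0)\ge 1$, and if $s>1$ then $s$ is a common divisor of $\ell_1(g)=s\,\ell_1(g_0)$ and of $N=sD$, so the fraction $\ell_1(g)/N$ would not be in lowest terms. Contrapositively, if $\ell_1(g)/N$ is in lowest terms then $s=1$, i.e.\ $N=D=\operatorname{ord}(f)$.
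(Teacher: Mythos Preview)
Your proof is correct, and the underlying mechanism is the same as the paper's: factor $1+x^{kL}=(1+x^L)\sum_{i=0}^{k-1}x^{iL}$ in $\mathbb{F}_2[x]$ and observe that the shifted copies of the cofactor occupy disjoint blocks of exponents (since the cofactor has degree at most $L-1$), so $\ell_1$ is exactly additive across the sum. The execution differs slightly. The paper relates $h$ directly to $g$ by asserting $N\mid M$ and writing $h(x)=g(x)\bigl(1+x^N+\cdots+x^{(j-1)N}\bigr)$; you instead route both $g$ and $h$ through the minimal cofactor $g_0=(1+x^D)/f$ at the order $D$. Your detour is arguably cleaner, since Theorem~\ref{order divides} immediately yields $D\mid N$ and $D\mid M$, whereas $N\mid M$ does not follow from it without an extra step. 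You also spell out the contrapositive argument for the ``in particular'' clause, which the paper's proof leaves implicit.
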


\begin{proof}
From Theorem \ref{order divides}, we know that $N\mid M$, so $M=jN$ .  We can write
\[
h(x)=\frac{1+x^M}{f(x)}=g(x)\cdot\frac{1+x^{jN}}{1+x^N}=g(x)(1+x^N+\cdots+x^{(j-1)N}).
\]
If $\ell_1(g(x))=k$ and $\ell_{0,N-1}(g(x))=N-k$, then $\ell_1(h(x))=kM/N=jk$ and $\ell_{0,M-1}(h(x))=(N-k)M/N=M-jk=j(N-k)$.  This proves the assertion.
\end{proof}

\begin{definition}
For a non-negative integer $k$, let $b(k)$ denote the number of $1$'s in the standard binary representation of $k$.
\end{definition}

\begin{lemma}\label{Glaisher's Theorem}
For $r\geq2$,
\[
 \sum_{k=0}^{2^r-2} 2^{b(k)}=3^r-2^r.
\]

\end{lemma}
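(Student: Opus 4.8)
The plan is to reduce the stated sum to the closely related \emph{complete} sum
\[
S_r := \sum_{k=0}^{2^r-1} 2^{b(k)},
\]
which enjoys a clean multiplicative structure, and then to correct for the single missing term. The crucial observation is that as $k$ ranges over $0, 1, \dots, 2^r-1$ it ranges over all binary strings of length $r$ (with leading zeros permitted), and that $b(k)$ is additive over the bit positions: writing $k=\sum_{i=0}^{r-1}\epsilon_i 2^i$ with each $\epsilon_i\in\{0,1\}$, we have $2^{b(k)}=\prod_{i=0}^{r-1} 2^{\epsilon_i}$.

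First I would evaluate $S_r$ by distributing the product over all choices of the bits:
\[
S_r = \sum_{\epsilon_0,\dots,\epsilon_{r-1}\in\{0,1\}}\ \prod_{i=0}^{r-1} 2^{\epsilon_i} = \prod_{i=0}^{r-1}\left(2^0+2^1\right) = 3^r.
\]
Equivalently, one can prove $S_r=3S_{r-1}$ by splitting the range at $2^{r-1}$: the integers $2^{r-1}+j$ for $0\le j\le 2^{r-1}-1$ satisfy $b(2^{r-1}+j)=1+b(j)$, so the upper half contributes $2S_{r-1}$ while the lower half contributes $S_{r-1}$, and $S_1=2^{b(0)}+2^{b(1)}=3$ seeds the induction.

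Finally I would account for the difference between the desired sum and $S_r$. The only index in $0,1,\dots,2^r-1$ that is excluded from the lemma's sum is $k=2^r-1$, whose binary representation consists of $r$ ones, so $b(2^r-1)=r$ and the omitted term equals $2^r$. Hence
\[
\sum_{k=0}^{2^r-2} 2^{b(k)} = S_r - 2^{b(2^r-1)} = 3^r - 2^r,
\]
as claimed.

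I expect no serious obstacle here; the entire content lies in recognizing that extending the upper limit from $2^r-2$ to $2^r-1$ completes the range of all $r$-bit strings and thereby factorizes the sum, after which subtracting the all-ones term is routine. The hypothesis $r\ge 2$ is not essential for the identity itself (it holds for $r=1$ as well) and is presumably imposed only for the way the lemma is invoked later.
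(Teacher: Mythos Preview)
Your argument is correct and follows essentially the same route as the paper's: both implicitly consider the full range $0\le k\le 2^r-1$, show that this complete sum equals $3^r$, and then remove the single term corresponding to $k=2^r-1$, which contributes $2^r$. The only cosmetic difference is how the value $3^r$ is obtained: the paper groups terms by the common value $b(k)=j$, obtaining $\sum_{j=0}^{r-1}\binom{r}{j}2^j=(1+2)^r-2^r$ via the binomial theorem, whereas you factor the sum over bit positions as $\prod_{i=0}^{r-1}(2^0+2^1)=3^r$ and then subtract the missing term; these are two equivalent readings of the same identity.
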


\begin{proof}
Since $b(2^r-1)=r$ with $r$ digits in the representation and no zeros, if $0\leq n\leq 2^r-2$, then $b(n)\leq r-1$.  Consider counting the value of $\sum_{k=0}^{2^r-2} 2^{b(k)}$ by first fixing the value of $b(k)$.  Let $b(k)=j$, where $0\leq j\leq r-1$.  There are $\binom{r}{j}$ numbers $n$ in the range of summation with $b(n)=j$.  Hence the contribution to the sum from numbers with $b(k)=j$ is $\binom{r}{j}2^j$.  Using this and the binomial formula, we obtain
\begin{align*}
 \sum_{k=0}^{2^r-2} 2^{b(k)}&=\binom{r}{0}2^0+\binom{r}{1}2^1+\binom{r}{2}2^2+\cdots+\binom{r}{r-1}2^{r-1}\\
&=(2+1)^r-2^r\\
&=3^r-2^r.
\qedhere
\end{align*}
\end{proof}

\begin{lemma}\label{a,b lemma}
For $a,b\in\mathbb{N}$, 
\[
(1+x^a+x^b)\prod_{j=0}^{m-1}\left(1+x^{2^ja}+x^{2^jb}\right)=1+x^{2^ma}+x^{2^mb}.
\]
\end{lemma}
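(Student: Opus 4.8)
The plan is to rewrite every factor as a power of the single polynomial $P(x):=1+x^a+x^b$ using the Frobenius identity recorded in equation \eqref{nice exponents}, thereby collapsing the entire product into one power of $P(x)$ via a geometric-series count of exponents.

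First I would observe that substituting $x^{2^j}$ for $x$ in $P$ gives $P\!\left(x^{2^j}\right)=1+x^{2^ja}+x^{2^jb}$, which is precisely the $j$-th factor of the product. Since we are working in $\mathbb{F}_2[x]$, equation \eqref{nice exponents} tells us that $P\!\left(x^{2^j}\right)=P(x)^{2^j}$. Hence the left-hand side becomes
\[
P(x)\cdot\prod_{j=0}^{m-1}P(x)^{2^j}=P(x)^{\,1+\sum_{j=0}^{m-1}2^j}.
\]
The exponent evaluates to $1+\sum_{j=0}^{m-1}2^j=1+(2^m-1)=2^m$, so the product equals $P(x)^{2^m}$. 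Applying \eqref{nice exponents} one last time, now in the form $P(x)^{2^m}=P\!\left(x^{2^m}\right)$, yields $P(x)^{2^m}=1+x^{2^ma}+x^{2^mb}$, which is exactly the desired right-hand side.

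Alternatively, one could argue by induction on $m$. The base case $m=0$ is the empty-product identity $P(x)=1+x^a+x^b$, and the inductive step multiplies the $m$-case by the factor $1+x^{2^ma}+x^{2^mb}$, using that in characteristic $2$ one has $\left(1+x^{2^ma}+x^{2^mb}\right)^2=1+x^{2^{m+1}a}+x^{2^{m+1}b}$, again a consequence of \eqref{nice exponents}.

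Either route shows that the only real content of the lemma is the characteristic-$2$ squaring of \eqref{nice exponents}; I do not expect a genuine obstacle beyond recognizing that the factors of the product are exactly the Frobenius iterates $P(x),P(x)^2,\dots,P(x)^{2^{m-1}}$ of $P$, so the argument should be short.
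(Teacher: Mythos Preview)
Your proof is correct. The paper proves the lemma by the induction you sketch as your alternative (with base case $m=1$ rather than $m=0$), so that route matches exactly; your primary argument---recognizing each factor as $P(x)^{2^j}$ and summing the exponents $1+\sum_{j=0}^{m-1}2^j=2^m$---is a slightly slicker direct repackaging of the same Frobenius content, avoiding the explicit induction at the cost of nothing.
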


\begin{proof}
Let $m=1$. Then the product is $(1+x^a+x^b)(1+x^a+x^b)=(1+x^{2a}+x^{2b})$ by Equation \eqref{nice exponents}.  Suppose the result holds for all $1\leq m\leq n$.
Then
\begin{align*}
 (1+x^a+x^b)\prod_{j=0}^n\left(1+x^{2^ja}+x^{2^jb}\right)&=(1+x^{2^na}+x^{2^nb})(1+x^{2^na}+x^{2^nb})\\
&=\left(1+(x^2)^{2^na}+(x^2)^{2^nb}\right)\\
&=1+x^{2^{n+1}a}+x^{2^{n+1}b},
\end{align*}
where we have again used Equation \eqref{nice exponents}.
Hence by induction the result holds for all $m$.
\end{proof}

\begin{lemma}\label{in general lemma}
For $1\leq r\in\mathbb{N}$,
\[
\left(1+x^{2^r-1}+x^{2^r}\right)\left(\prod_{j=0}^{r-1}\left(1+x^{(2^r-1)2^j}+x^{2^r2^j}\right)+x^{4^r-2^r}\right)=1+x^{4^r-1}.
\]
\end{lemma}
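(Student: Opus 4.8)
The plan is to recognize this identity as Lemma~\ref{a,b lemma} applied with a particular substitution, together with an added correction term engineered to cancel the unwanted monomials. Specifically, I would set $a=2^r-1$ and $b=2^r$ and invoke Lemma~\ref{a,b lemma} with $m=r$. Since $2^j a=(2^r-1)2^j$, $2^j b=2^r2^j$, $2^r a=4^r-2^r$, and $2^r b=4^r$, this yields
\[
\left(1+x^{2^r-1}+x^{2^r}\right)\prod_{j=0}^{r-1}\left(1+x^{(2^r-1)2^j}+x^{2^r2^j}\right)=1+x^{4^r-2^r}+x^{4^r}.
\]

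Next I would distribute the factor $1+x^{2^r-1}+x^{2^r}$ over the sum inside the second pair of parentheses on the lemma's left-hand side. The first resulting piece is exactly the product above, namely $1+x^{4^r-2^r}+x^{4^r}$. For the second piece I would expand
\[
\left(1+x^{2^r-1}+x^{2^r}\right)x^{4^r-2^r}=x^{4^r-2^r}+x^{4^r-1}+x^{4^r},
\]
using the exponent computations $(2^r-1)+(4^r-2^r)=4^r-1$ and $2^r+(4^r-2^r)=4^r$.

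Adding the two pieces in $\mathbb{F}_2[x]$, the terms $x^{4^r-2^r}$ and $x^{4^r}$ each appear twice and therefore vanish, leaving precisely $1+x^{4^r-1}$, as claimed. The only real content is the observation that the correction term $x^{4^r-2^r}$ is chosen so that its product with the trinomial both cancels the two extraneous monomials produced by Lemma~\ref{a,b lemma} and supplies the cross term $x^{4^r-1}$ that becomes the desired output. I expect no substantive obstacle beyond careful bookkeeping of the exponents and remembering that all coefficient arithmetic is modulo $2$; the heavy lifting has already been done in Lemma~\ref{a,b lemma}.
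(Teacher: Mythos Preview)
Your proposal is correct and follows essentially the same approach as the paper: apply Lemma~\ref{a,b lemma} with $a=2^r-1$, $b=2^r$, $m=r$, distribute the trinomial over the added $x^{4^r-2^r}$, and observe the resulting cancellations in $\mathbb{F}_2[x]$. The paper's proof is line-for-line the same computation.
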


\begin{proof}
Using Lemma \ref{a,b lemma} with $a=2^r-1, b=2^r$, and $m=r$, 
\begin{align*}
\left(1+x^{2^r-1}+x^{2^r}\right)&\left(\prod_{j=0}^{r-1}\left(1+x^{(2^r-1)2^j}+x^{2^r2^j}\right)+x^{4^r-2^r}\right)\\
&=1+x^{2^r(2^r-1)}+x^{2^r(2^r)}+\left(1+x^{2^r-1}+x^{2^r}\right)x^{4^r-2^r}\\
&=1+x^{4^r-2^r}+x^{4^r}+x^{4^r-2^r}+x^{4^r-1}+x^{4^r}\\
&=1+x^{4^r-1}.
\qedhere
\end{align*}
\end{proof}

Let $d_{r,1}=3^r-1$ and $c_{r,1}=(4^r-1)-d_{r,1}=4^r-3^r$.

\begin{theorem}\label{maintheorem}
Fix $r\geq3$. 
\begin{enumerate}[\upshape (i)]
\item The order of $f_{r,1}(x):=1+x+x^{2^r-1}+x^{2^r+1}$ divides $4^r-1$.\label{order 1}
\item The polynomial $h_{r,1}(x):=(1+x^{4^r-1})/f_{r,1}(x)=f_{r,1}^*$ has $\ell_1(h_{r,1}(x))=c_{r,1}$.\label{length 1}
\item Hence $\beta_{4^r-1}(f_{r,1})=(c_{r,1},d_{r,1})$ and $f_{r,1}(x)$ is robust.\label{robust 1}
\end{enumerate}
\end{theorem}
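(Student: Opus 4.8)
The plan is to exploit the factorization $f_{r,1}(x) = (1+x)g(x)$ with $g(x) = 1 + x^{2^r-1} + x^{2^r}$, which one verifies by expanding $(1+x)(1 + x^{2^r-1} + x^{2^r})$ in $\mathbb{F}_2[x]$ (the two copies of $x^{2^r}$ cancel). First I would recast Lemma~\ref{in general lemma}: applying \eqref{nice exponents} to each factor gives $\prod_{j=0}^{r-1}(1 + x^{(2^r-1)2^j} + x^{2^r 2^j}) = g(x)^{2^r-1}$, so the lemma reads $g(x)\bigl(g(x)^{2^r-1} + x^{4^r-2^r}\bigr) = 1 + x^{4^r-1}$. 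Writing $Y(x) := g(x)^{2^r-1} + x^{4^r-2^r}$, we have $Y(1) = 1 + 1 = 0$, so $(1+x)\mid Y(x)$ and $h_{r,1}(x) = (1+x^{4^r-1})/f_{r,1}(x) = Y(x)/(1+x)$ is a polynomial. In particular $f_{r,1}\mid 1+x^{4^r-1}$, and Theorem~\ref{order divides} yields part~\ref{order 1}.

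Next I would count lengths. Expanding $g^{2^r-1} = \prod_{j=0}^{r-1}(1 + x^{(2^r-1)2^j} + x^{2^r2^j})$, each monomial corresponds to choosing, for every $j$, the first, middle, or last term; recording the indices gives disjoint sets $S, T \subseteq \{0,\dots,r-1\}$ and exponent $2^r(a+b) - a$ with $a = \sigma(S), b = \sigma(T)$, where $\sigma(X) = \sum_{j\in X}2^j$. The map $(a,b)\mapsto 2^r(a+b)-a$ is injective on the relevant range (recover $a$ from the residue mod $2^r$, then $a+b$), so there is no cancellation and $\ell_1(g^{2^r-1}) = 3^r$. The unique top monomial is $x^{4^r-2^r}$ (from $a=0, b = 2^r-1$), so it cancels against the added $x^{4^r-2^r}$, giving $\ell_1(Y) = 3^r - 1$.

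The hard part is extracting $\ell_1(h_{r,1}) = \ell_1(Y/(1+x))$. Division by $1+x$ is a running parity: if $Y = \sum a_i x^i$ has sorted exponents $e_1 < \cdots < e_{2t}$ (here $2t = 3^r-1$), then $Y/(1+x)$ has a $1$ in position $k$ exactly when an odd number of the $e_i$ are $\le k$, whence $\ell_1(Y/(1+x)) = \sum_{s=1}^{t}(e_{2s}-e_{2s-1}) = \sum_{j}(-1)^j e_j$. I would organize the exponents by the value $w = a+b \in \{0,\dots,2^r-1\}$: for fixed $w$ they are $\{2^r w - a : a \preceq w\}$, lying in $[w(2^r-1), w2^r]$, and these intervals are disjoint and increasing in $w$, so the global sorted order is the concatenation of the groups. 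A direct recursion on the binary expansion evaluates the within-group alternating sum of submasks as $-2^{b(w)+v(w)-1}$, where $v(w)$ is the lowest set bit, while the number of exponents preceding group $w$ is always odd; this reduces the whole computation to $\sum_{w=1}^{2^r-1}2^{b(w)+v(w)-1}$. Grouping by $v(w)$ and invoking the count behind Lemma~\ref{Glaisher's Theorem} (namely $\sum_{k=0}^{2^s-1}2^{b(k)} = 3^s$) turns this into a geometric sum equal to $3^r - 2^r$. Restoring the removed top exponent $4^r - 2^r$ then gives $\ell_1(h_{r,1}) = (4^r-2^r) - (3^r-2^r) = 4^r - 3^r = c_{r,1}$, which is part~\ref{length 1}. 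I expect the sign bookkeeping and the handling of group boundaries in this division step to be the main obstacle.

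Finally, part~\ref{robust 1} follows quickly: $\beta_{4^r-1}(f_{r,1}) = (\ell_1(h_{r,1}), (4^r-1)-\ell_1(h_{r,1})) = (c_{r,1}, d_{r,1})$, and robustness amounts to $c_{r,1} > d_{r,1} + 1$, i.e. $4^r > 2\cdot 3^r$, which holds precisely for $r \ge 3$. Passing from $\beta_{4^r-1}$ to the genuine order is handled by Lemma~\ref{exact order unnecessary}, together with the bound $\operatorname{ord}(f_{r,1}) \ge \deg f_{r,1} = 2^r+1$, which keeps the scaling factor small enough that the inequality survives.
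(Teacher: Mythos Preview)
Your plan is correct and would go through, but it takes a noticeably harder road to part~(\ref{length 1}) than the paper does. You expand $g^{2^r-1}$ into the $3^r$ monomials $x^{2^r(a+b)-a}$ indexed by pairs $a\preceq w$, $b=w-a$, and then compute $\ell_1\bigl(Y/(1+x)\bigr)$ as the alternating sum $\sum_j(-1)^j e_j$ of the sorted exponents. That forces you to (a) prove the closed form $\sum_{a\preceq w,\ a\text{ decreasing}}(-1)^i a_i=-2^{\,b(w)+v(w)-1}$ for the within-group piece, (b) track the global parity at each group boundary, and (c) evaluate $\sum_{w=1}^{2^r-1}2^{\,b(w)+v(w)-1}=3^r-2^r$ by a second summation over $v(w)$. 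All three steps are right (and your parity observation that $\sum_{u<w}2^{b(u)}$ is always odd is the key to (b)), but this is exactly the ``sign bookkeeping'' you flagged as the obstacle.

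The paper sidesteps all of it with one regrouping: instead of splitting each factor into three terms, it writes
\[
1+x^{(2^r-1)2^j}+x^{2^r\cdot 2^j}=1+x^{(2^r-1)2^j}(1+x)^{2^j},
\]
so the expanded product becomes $1+x^{4^r-2^r}+\sum_{n=1}^{2^r-1}x^{(2^r-1)n}(1+x)^n$. Now every summand except the constant pair carries an explicit factor of $(1+x)$, so dividing by $1+x$ is immediate:
\[
h_{r,1}(x)=\sum_{j=0}^{4^r-2^r-1}x^j+\sum_{n=1}^{2^r-1}x^{(2^r-1)n}(1+x)^{n-1}.
\]
The blocks $x^{(2^r-1)n}(1+x)^{n-1}$ occupy disjoint degree ranges, so $\ell_1$ of the second sum is just $\sum_{n}2^{b(n-1)}=3^r-2^r$ by Glaisher and Lemma~\ref{Glaisher's Theorem}, and adding the all-ones polynomial of the same degree flips $0$'s and $1$'s to give $4^r-3^r$ directly. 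In effect, the paper performs the division by $(1+x)$ \emph{symbolically before} counting, whereas you count first and then divide via the running-parity formula; the former avoids the alternating-sum machinery entirely. For part~(\ref{robust 1}) the paper also takes a shorter path, using Lemma~\ref{exact order unnecessary} to identify $\gamma(f_{r,1})=c_{r,1}/(4^r-1)$ and then invoking Remark~\ref{2/3 remark} once $c_{r,1}/(4^r-1)>2/3$, rather than bounding the scaling factor $j=(4^r-1)/\operatorname{ord}(f_{r,1})$ via $\operatorname{ord}(f_{r,1})\ge 2^r+1$ as you do.
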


\begin{proof}
Define 
\begin{equation}\label{product representation for g_r,1}
g_{r,1}(x)=\prod_{j=0}^{r-1}\left(1+x^{(2^r-1)2^j}+x^{2^r2^j}\right)+x^{4^r-2^r}.
\end{equation}
Then Lemma \ref{in general lemma} gives
\[
\left(1+x^{2^r-1}+x^{2^r}\right)g_{r,1}(x)=1+x^{4^r-1}.
\]
Since 
\[
g_{r,1}(1)=\prod_{j=0}^{r-1}\left(1+1+1\right) +1\equiv0\pmod2,
\]
we know $(1+x)\mid g_{r,1}(x)$.  Hence there exists $h_{r,1}(x)\in\mathbb{F}_2[x]$ such that $(1+x)h_{r,1}(x)=g_{r,1}(x)$, so
\[
\left(1+x^{2^r-1}+x^{2^r}\right)(1+x)h_{r,1}(x)=1+x^{4^r-1}.
\]
Since $f_{r,1}(x)=1+x+x^{2^r-1}+x^{2^r+1}=(1+x)\left(1+x^{2^r-1}+x^{2^r}\right)$, we see that $f_{r,1}(x)\mid \left(1+x^{4^r-1}\right)$.  We have not shown that $4^r-1$ is actually the order of $f_{r,1}(x)$, but we know by Lemma \ref{exact order unnecessary} that the exact order is not necessary to determine robustness.  We have checked by direct computation that for $r\leq 10$, $4^r-1$ is the exact order of $f_{r,1}$.

Now we seek a nice expression for $h_{r,1}(x)$ to use in proving part (\ref{length 1}).  We will do this by manipulating $g_{r,1}(x)$.  Rewrite \eqref{product representation for g_r,1} to obtain
\begin{equation}\label{factored product representation for g_r,1}
g_{r,1}(x)=\prod_{j=0}^{r-1}\left(1+x^{(2^r-1)2^j}(1+x^{2^j})\right)+x^{4^r-2^r}.
\end{equation}
We next expand the product in \eqref{factored product representation for g_r,1} and use Equation \eqref{nice exponents}, specifically $1+x^{2^j}=(1+x)^{2^j}$, to see that, with the exception of $1$ and $x^{4^r-2^r}$, all summands in the expanded product are terms of the form $x^{(2^r-1)\sum2^i}(1+x)^{\sum2^i}$, where $\sum2^i$ is a sum of some subset of $\{2^0,2^1,\ldots,2^{r-1}\}$.  Considering all such $\sum2^i$, we get all terms of the form $x^{(2^r-1)n}(1+x)^n$ for $1\leq n\leq 2^r-1$.  Thus we can rewrite \eqref{factored product representation for g_r,1} as
\begin{align*}
g_{r,1}(x)&=1+x^{4^r-2^r}+\sum_{n=1}^{2^r-1} x^{(2^r-1)n}(1+x)^n\\ &=(1+x)\left(\frac{1+x^{4^r-2^r}}{1+x}+\sum_{n=1}^{2^r-1} x^{(2^r-1)n}(1+x)^{n-1}\right)\\
&=(1+x)\left(\sum_{j=0}^{4^r-2^r-1} x^j +\sum_{n=1}^{2^r-1} x^{(2^r-1)n}(1+x)^{n-1}\right).
\end{align*}
Hence by the definition of $h_{r,1}(x)$,
\begin{equation*}
h_{r,1}(x)=\sum_{j=0}^{4^r-2^r-1} x^j+\sum_{n=1}^{2^r-1} x^{(2^r-1)n}(1+x)^{n-1}.
\end{equation*}

We shall use this representation of $h_{r,1}(x)$ to determine $\ell_1(h_{r,1}(x))$.  We begin by focusing on 
\[
S_{r,1}(x):=\sum_{n=1}^{2^r-1} x^{(2^r-1)n}(1+x)^{n-1},
\]
which is a polynomial of degree $4^r-2^r-1$.  We note that the greatest exponent in a monomial when $n=k$ is $(2^r-1)k+(k-1)=2^rk-1$, and the least exponent in a monomial when $n=k+1$ is $(2^r-1)(k+1)=2^rk+2^r-(k+1)$.  Since $k+1\leq2^r-1$, it follows that $2^rk-1<2^rk+2^r-(k+1)$, so there is no cancellation of terms within $S_{r,1}(x)$.  Glaisher's Theorem, see \cite{Beeblebrox}, states that the number of odd binomial coefficients of the form $\binom{n}{j}$, $0\leq j\leq n$, is equal to $2^{b(j)}$.  Using this and Lemma \ref{Glaisher's Theorem}, we see that
\[
 \ell_1(S_{r,1}(x))=\sum_{j=1}^{2^r-1} 2^{b(j-1)}=\sum_{k=0}^{2^r-2} 2^{b(k)}=3^r-2^r.
\]
Since $S_{r,1}(x)$ is a polynomial of degree $4^r-2^r-1$, $S_{r,1}(x)$ has $4^r-2^r$ possible terms and $\ell_{0,4^r-2^r-1}\left(S_{r,1}(x)\right)=4^r-2^r-(3^r-2^r)=4^r-3^r$.  Then, to construct $h_{r,1}(x)$, we add $\sum_{j=0}^{4^r-2^r-1} x^j$.  Note that the degree of this sum is equal to the degree of $S_{r,1}(x)$.  This addition has the effect of reversing the $0$'s and $1$'s, so $\ell_1(h_{r,1}(x))=4^r-3^r$ and $\ell_{0,4^r-2^r-1}(h_{r,1}(x))=3^r-2^r$, completing the proof of part (\ref{length 1}).  Because the order of $f_{r,1}(x)$ divides $4^r-1$, we consider $h_{r,1}(x)$ as a polynomial of degree $4^r-2$ with $4^r-1$ possible terms.  The $2^r-1$ terms of degree $4^r-2^r$,\dots,$4^r-2$ have coefficient $0$.  Thus in total $\ell_1(h_{r,1}(x))=4^r-3^r=c_{r,1}$ and $\ell_{0,4^r-2}(h_{r,1}(x))=3^r-1=d_{r,1}$.  

Since $\operatorname{gcd}(c_{3,1},d_{3,1})=\operatorname{gcd}(37,26)=1$, we know $\ell_1\left(h_{3,1}\right)/(4^3-1)$ is in lowest terms.  By Lemma \ref{exact order unnecessary}, the order of $f_{3,1}$ is indeed $4^3-1$, and the polynomial is robust.  For $r\geq4$, it is not necessarily the case that $\operatorname{gcd}(c_{r,1},d_{r,1})=1$, but it is true that
\begin{align*}
\frac{c_{r,1}}{4^r-1}&=\frac{4^r-3^r}{4^r-1}=1-\frac{3^r-1}{4^r-1}\\
&>1-\frac{3^r-(3/4)^r}{4^r-1}=1-\left(\frac{3}{4}\right)^r>\frac{2}{3},
\end{align*}
so $f_{r,1}(x)$ is robust by Remark \ref{2/3 remark}.
\end{proof}

\begin{example}\label{f_3,1}
Consider $f_{3,1}(x)=1+x+x^7+x^9$.  The order of $f_{3,1}(x)$ is $4^3-1=63$.  The polynomial $f_{3,1}^*(x)$ has $\ell_1\left(f_{3,1}^*(x)\right)=4^3-3^3=37$, and $\beta\left(f_{3,1}(x)\right)=(37,26)$.  Explicitly,
\begin{align*}
 f_{3,1}^*(x)=&x^{54}+x^{52}+x^{50}+x^{48}+x^{45}+x^{44}+x^{41}+x^{40}+x^{38}+x^{37}+x^{36}+
   x^{34}\\
  &+x^{33}+x^{32}+x^{27}+x^{26}+x^{25}+x^{24}+x^{22}+x^{20}+x^{19}+x^{1
   8}+x^{17}\\
  &+x^{16}+x^{13}+x^{12}+x^{11}+x^{10}+x^9+x^8+x^6+x^5+x^4+x^3+x^2+x
   +1.
\end{align*}

\end{example}

\begin{corollary}\label{maintheorem corollary}
The reciprocal polynomials $f_{(R),r,1}=1+x^2+x^{2^r}+x^{2^r+1}$ are robust with order dividing $4^r-1$.
\end{corollary}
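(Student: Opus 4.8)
The plan is to obtain this corollary with essentially no new computation, by reading it off from the properties of $f_{r,1}(x)$ already established in Theorem \ref{maintheorem} together with the general reciprocal results. First I would confirm that $f_{(R),r,1}(x)$ as written is genuinely the reciprocal polynomial of $f_{r,1}(x)$ in the sense of Definition \ref{reciprocal polynomial}. Since $\deg(f_{r,1})=2^r+1$, Definition \ref{reciprocal polynomial} gives $f_{(R),r,1}(x)=x^{2^r+1}f_{r,1}(1/x)$, and a term-by-term computation sends $1\mapsto x^{2^r+1}$, $x\mapsto x^{2^r}$, $x^{2^r-1}\mapsto x^2$, and $x^{2^r+1}\mapsto 1$, producing exactly $1+x^2+x^{2^r}+x^{2^r+1}$. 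This verifies the stated formula and also confirms $f_{(R),r,1}(0)=1\neq 0$, so that its order is well defined.

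With that identification in hand, the order claim follows from Theorem \ref{reciprocal matches order}: if $D$ is the exact order of $f_{r,1}(x)$, then $f_{(R),r,1}(x)$ has exact order $D$ as well. By part (\ref{order 1}) of Theorem \ref{maintheorem} we know $D\mid 4^r-1$, and hence the order of $f_{(R),r,1}(x)$ likewise divides $4^r-1$. Note that this transfers the \emph{divisibility} statement without our needing to know that $D$ equals $4^r-1$; the equality of exact orders supplied by Theorem \ref{reciprocal matches order} is more than enough.

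For robustness I would simply invoke Lemma \ref{robustness of reciprocals}, which states that $\beta(f)=\beta(f_{(R)})$ and that a polynomial is robust exactly when its reciprocal is. Since part (\ref{robust 1}) of Theorem \ref{maintheorem} shows $f_{r,1}(x)$ is robust with $\beta_{4^r-1}(f_{r,1})=(c_{r,1},d_{r,1})$, it follows at once that $f_{(R),r,1}(x)$ is robust and shares the same pair $(c_{r,1},d_{r,1})$, so that $\gamma(f_{(R),r,1})=\gamma(f_{r,1})$ and in particular $\lim_{r\to\infty}\gamma(f_{(R),r,1})=1$.

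I expect no real obstacle here, since the substance of the corollary has been pre-packaged into Lemma \ref{robustness of reciprocals} and Theorem \ref{reciprocal matches order}. The only genuine task is the bookkeeping of checking that $1+x^2+x^{2^r}+x^{2^r+1}$ is indeed the correct reciprocal, and the only point meriting a word of care is keeping straight the distinction between "exact order equals $D$" and "order divides $4^r-1$," which the argument above handles cleanly.
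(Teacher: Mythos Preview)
Your proposal is correct and follows essentially the same approach as the paper, which simply states that the corollary follows immediately from Theorem \ref{maintheorem} and Lemma \ref{robustness of reciprocals}. You have merely unpacked the details a bit further (verifying the reciprocal explicitly and separately citing Theorem \ref{reciprocal matches order} for the order), which is fine but not a genuinely different route.
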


\begin{proof}
This follows immediately from Theorem \ref{maintheorem} and Lemma \ref{robustness of reciprocals}.
\end{proof}

\begin{example}\label{f_(R),3,1}
Consider $f_{(R),3,1}(x)=1+x^2+x^8+x^9$.  The order of $f_{(R),3,1}(x)$ is $4^3-1=63$.  The polynomial $f_{(R),3,1}^*(x)$ has $\ell_1\left(f_{(R),3,1}^*(x)\right)=4^3-3^3=37$, and $\beta\left(f_{(R),3,1}(x)\right)=(37,26)$.
\end{example}

We now exhibit another family of robust polynomials.

Let $c_{r,2}=4^r-3^r+2^r$ and $d_{r,2}=3^r+1$.

\begin{theorem}\label{Second Main Theorem}
Fix $r\geq3$.  
\begin{enumerate}[\upshape (i)]
\item The order of $f_{r,2}(x):=1+x+x^{2^r}+x^{2^r+2}$ divides $4^r+2^r+1$.\label{order 2} 
\item The polynomial $h_{r,2}(x):=(1+x^{4^r+2^r+1})/f_{r,2}(x)=f_{r,2}^*$ has $\ell_1(h_{r,2}(x))=c_{r,2}$.\label{length 2}  
\item Hence $\beta_{4^r+2^r+1}(f_{r,2}(x))=(c_{r,2},d_{r,2})$ and $f_{r,2}(x)$ is robust.\label{robust 2}
\end{enumerate}
\end{theorem}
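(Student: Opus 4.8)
The plan is to mirror the proof of Theorem \ref{maintheorem}, exploiting a factorization of $f_{r,2}$ over $\mathbb{F}_2$ that parallels the one used there. First I would observe that, since $1+x^2=(1+x)^2$ in $\mathbb{F}_2[x]$,
\[
f_{r,2}(x)=1+x+x^{2^r}+x^{2^r+2}=(1+x)+x^{2^r}(1+x)^2=(1+x)\left(1+x^{2^r}+x^{2^r+1}\right),
\]
so that $f_{r,2}$ splits off the factor $1+x$ exactly as $f_{r,1}$ did, and the analysis reduces to understanding the cofactor $1+x^{2^r}+x^{2^r+1}$.

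For part (\ref{order 2}) I would establish the analogue of Lemma \ref{in general lemma}, namely
\[
\left(1+x^{2^r}+x^{2^r+1}\right)\left(\prod_{j=0}^{r-1}\left(1+x^{2^r2^j}+x^{(2^r+1)2^j}\right)+x^{4^r}\right)=1+x^{4^r+2^r+1}.
\]
This follows from Lemma \ref{a,b lemma} applied with $a=2^r$, $b=2^r+1$, $m=r$, which collapses the product to $1+x^{4^r}+x^{4^r+2^r}$; multiplying the correction term $x^{4^r}$ by $1+x^{2^r}+x^{2^r+1}$ produces $x^{4^r}+x^{4^r+2^r}+x^{4^r+2^r+1}$, and the first two summands cancel the matching terms, leaving $1+x^{4^r+2^r+1}$. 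Writing $g_{r,2}(x)$ for the second factor, I would check $g_{r,2}(1)=\prod_{j=0}^{r-1}(1+1+1)+1\equiv0\pmod2$, so $(1+x)\mid g_{r,2}(x)$; setting $g_{r,2}=(1+x)h_{r,2}$ and combining with the factorization of $f_{r,2}$ gives $f_{r,2}(x)h_{r,2}(x)=1+x^{4^r+2^r+1}$, proving (\ref{order 2}).

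For part (\ref{length 2}) I would rewrite each factor as $1+x^{2^r2^j}(1+x)^{2^j}$, expand so that sums of subsets of $\{2^0,\dots,2^{r-1}\}$ produce exactly $\sum_{n=0}^{2^r-1}x^{2^rn}(1+x)^n$, add the $x^{4^r}$ term, and divide by $1+x$ to reach the working expression
\[
h_{r,2}(x)=\sum_{j=0}^{4^r-1}x^j+\sum_{n=1}^{2^r-1}x^{2^rn}(1+x)^{n-1}.
\]
Writing $S_{r,2}(x)$ for the second sum, a gap estimate on consecutive blocks (the top exponent $2^rk+k-1$ of the $n=k$ block lies below the bottom exponent $2^r(k+1)$ of the next) shows there is no internal cancellation, so Glaisher's Theorem together with Lemma \ref{Glaisher's Theorem} yields $\ell_1(S_{r,2})=\sum_{k=0}^{2^r-2}2^{b(k)}=3^r-2^r$. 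The delicate bookkeeping, which I expect to be the main obstacle, is that $S_{r,2}$ is supported only on $[2^r,4^r-2]$ while the geometric sum runs over $[0,4^r-1]$; adding the two leaves all $2^r$ positions in $[0,2^r-1]$ and the lone position $4^r-1$ equal to $1$, while flipping the coefficients of $S_{r,2}$ on $[2^r,4^r-2]$ and thereby turning its $4^r-3^r-1$ zeros there into ones. Summing these contributions gives $\ell_1(h_{r,2})=2^r+(4^r-3^r-1)+1=4^r-3^r+2^r=c_{r,2}$, establishing (\ref{length 2}).

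Part (\ref{robust 2}) would follow by counting zeros: since $\deg h_{r,2}=4^r-1$ but we regard it as a polynomial with $4^r+2^r+1$ possible coefficients, $\ell_{0,4^r+2^r}(h_{r,2})=(4^r+2^r+1)-c_{r,2}=3^r+1=d_{r,2}$, giving $\beta_{4^r+2^r+1}(f_{r,2})=(c_{r,2},d_{r,2})$. For robustness I would split on $r$. For $r\geq4$ the inequality $4^r+2^r>3^{r+1}+2$ holds (since $(4/3)^r>3$ for $r\geq4$, whence $4^r>3^{r+1}$, while $2^r>2$), and rearranging shows $\gamma(f_{r,2})=c_{r,2}/(4^r+2^r+1)>2/3$, so Remark \ref{2/3 remark} gives robustness. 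For $r=3$ this crude bound fails, as $\gamma(f_{3,2})=45/73<2/3$; instead I would note $\gcd(c_{3,2},d_{3,2})=\gcd(45,28)=1$, so that Lemma \ref{exact order unnecessary} forces the order of $f_{3,2}$ to be exactly $73$, and the inequality $45>28+1$ establishes robustness directly.
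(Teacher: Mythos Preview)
Your proof is correct and follows essentially the same route as the paper's: the same factorization $f_{r,2}=(1+x)(1+x^{2^r}+x^{2^r+1})$, the same application of Lemma~\ref{a,b lemma} with the $x^{4^r}$ correction term, the same expansion into $\sum_{j=0}^{4^r-1}x^j+S_{r,2}(x)$ and Glaisher count for $S_{r,2}$, and the same split into a direct $\gcd$ check plus the $\gamma>2/3$ bound from Remark~\ref{2/3 remark} for robustness. Your cutoff for part~(\ref{robust 2}) at $r=3$ versus $r\geq4$ is in fact sharper than the paper's (which handles $r\leq5$ via $\gcd$ and $r\geq6$ via the $2/3$ bound), but otherwise the arguments coincide.
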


\begin{proof}
Let
\begin{equation}\label{product representation of g_r,2}
g_{r,2}(x)=\prod_{j=0}^{r-1} \left(1+x^{2^j2^r}+x^{2^j(2^r+1)}\right).
\end{equation}
By Lemma \ref{a,b lemma}, we know that
\begin{align}\label{a,b lemma applied to g_r,2}
\left(1+x^{2^r}+x^{2^r+1}\right)g_{r,2}(x)&=1+x^{2^r2^r}+x^{2^r(2^r+1)}\\
&=1+x^{4^r}+x^{4^r+2^r}\notag.
\end{align}
By factoring the terms in \eqref{product representation of g_r,2} we obtain
\begin{align*}
g_{r,2}(x)&=\prod_{j=0}^{r-1}\left(1+x^{2^j2^r}\left(1+x^{2^j}\right)\right).
\end{align*}
Then by expanding the product and using Equation \eqref{nice exponents}, we see that, with the exception of the term $1$, all summands in the expanded product are terms of the form $x^{2^r\sum2^i}(1+x)^{\sum2^i}$, where $\sum2^i$ is a sum of some subset of $\{2^0,2^1,\ldots,2^{r-1}\}$.  Considering all such $\sum2^i$, we get all terms of the form $x^{2^rn}(1+x)^n$ for all $1\leq n\leq 2^r-1$.  Thus we can rewrite \eqref{product representation of g_r,2} as
\begin{align}\label{g_r,2 factorization and sum}
g_{r,2}(x)&=\prod_{j=0}^{r-1}\left(1+x^{2^j2^r}\left(1+x^{2^j}\right)\right)\\
&=1+\sum_{i=1}^{2^r-1}x^{2^ri}(1+x)^i\notag.
\end{align}
Using equations \eqref{a,b lemma applied to g_r,2} and \eqref{g_r,2 factorization and sum}, we see that
\begin{align*}
\left(1+x^{2^r}+x^{2^r+1}\right)&\left(1+x^{4^r}+\sum_{i=1}^{2^r-1}x^{2^ri}(1+x)^i\right)\\
&=\left(1+x^{2^r}+x^{2^r+1}\right)\left(x^{4^r}+g_{r,2}(x)\right)\\
&=\left(1+x^{2^r}+x^{2^r+1}\right)x^{4^r}+1+x^{4^r}+x^{4^r+2^r}\\
&=x^{4^r}+x^{4^r+2^r}+x^{4^r+2^r+1}+1+x^{4^r}+x^{4^r+2^r}\\
&=1+x^{4^r+2^r+1}.
\end{align*}
Now observe that
\begin{align*}
\left(1+x^{2^r}+x^{2^r+1}\right)&\Big(1+x\Big)\left(\frac{1+x^{4^r}}{1+x}+\sum_{i=1}^{2^r-1}x^{2^ri}(1+x)^{i-1}\right)\\
&=\left(1+x+x^{2^r}+x^{2^r+2}\right)\left(\frac{1+x^{4^r}}{1+x}+\sum_{i=1}^{2^r-1}x^{2^ri}(1+x)^{i-1}\right)\\
&=f_{r,2}(x)\left(\frac{1+x^{4^r}}{1+x}+\sum_{i=1}^{2^r-1}x^{2^ri}(1+x)^{i-1}\right)\\
&=1+x^{4^r+2^r+1}.
\end{align*}
Thus the order of $f_{r,2}(x)$ divides $4^r+2^r+1$, completing the proof of part (\ref{order 2}), and that suffices to determine if $f_{r,2}(x)$ is robust by Lemma \ref{exact order unnecessary}.  We have checked by direct computation that $4^r+2^r+1$ is the exact order of $f_{r,2}(x)$ when $r\leq 10$.

Let 
\[
S_{r,2}(x):=\sum_{i=1}^{2^r-1}x^{2^ri}(1+x)^{i-1},
\]
so $h_{r,2}(x)=\frac{1+x^{4^r}}{1+x}+S_{r,2}(x)$.  We wish to determine $\ell_1(h_{r,2}(x))$ and will begin by determining $\ell_1(S_{r,2}(x))$.  We first note that when $i=k$, the monomial of greatest degree is $x^{2^rk}x^{k-1}=x^{2^rk+k-1}$.  When $i=k+1$, the monomial of lowest degree is $x^{2^r(k+1)}=x^{2^rk+2^r}$.  Since $k<2^r-1$, it follows that $2^rk+k-1<2^rk+2^r$, so there is no overlap of terms from $i=k$ and $i=k+1$.

Once again, we use Glaisher's Theorem, see \cite{Beeblebrox}, which states that the number of odd binomial coefficients of the form $\binom{n}{j}$, $0\leq j\leq n$, is equal to $2^{b(j)}$, and Lemma \ref{Glaisher's Theorem} to see that
\[
 \ell_1(S_{r,2}(x))=\sum_{j=1}^{2^r-1} 2^{b(j-1)}=\sum_{k=0}^{2^r-2} 2^{b(k)}=3^r-2^r.
\]

Because $S_{r,2}(x)$ is a polynomial of degree $2^r(2^r-1)+2^r-2=4^r-2$, we have $\ell_{0,4^r-2}(S_{r,2}(x))=4^r-2+1-3^r+2^r=4^r-3^r+2^r-1$.  Adding in the $(1+x^{4^r})/(1+x)=1+x+x^2+\cdots+x^{4^r-2}+x^{4^r-1}$ to construct $h_{r,2}(x)$ has the effect of reversing the $0$'s and $1$'s and adding an additional $1$.  Hence $\ell_1(h_{r,2}(x))=4^r-3^r+2^r$ and $\ell_{0,4^r-2}(h_{r,2}(x))=3^r-2^r$, and the proof of part (\ref{length 2}) is complete.  We now consider $h_{r,2}(x)$ as a polynomial of degree $4^r+2^r$, so the remaining $4^r+2^r-4^r+1=2^r+1$ terms have coefficient $0$.  Hence $\ell_1(h_{r,2}(x))=4^r-3^r+2^r=c_{r,2}$ and $\ell_{0,4^r+2^r}(h_{r,2}(x))=3^r+1=d_{r,2}$.  

It is not necessarily the case that $\operatorname{gcd}(c_{r,2},d_{r,2})=1$, and when this fails we know only that the order of $f_{r,2}(x)$ divides $4^r+2^r+1$, but this is still sufficient to determine if $f_{r,2}(x)$ is robust by Lemma \ref{exact order unnecessary}.  In fact, $\operatorname{gcd}(c_{6,2},d_{6,2})\neq1$, but $4^r+2^r+1$ is indeed the order of $f_{6,2}(x)$ and not just a divisor of the order.  For $1\leq r\leq 5$, $\operatorname{gcd}(c_{r,2},d_{r,2})=1$, so the order of $f_{r,2}(x)$ is $4^r+2^r+1$, and $\beta(f_{r,2}(x))=(c_{r,2},d_{r,2})$, making $f_{r,2}(x)$ robust.  For $r\geq 6$,
\begin{align*}
\frac{c_{r,2}}{4^r+2^r+1}&=\frac{4^r-3^r+2^r}{4^r+2^r+1}
=1-\frac{3^r+1}{4^r+2^r+1}\\
&>1-\frac{3^r+(3/2)^r+(3/4)^r}{4^r+2^r+1}
=1-\left(\frac{3}{4}\right)^r
>\frac{2}{3},
\end{align*}
so $f_{r,2}(x)$ is robust by Remark \ref{2/3 remark}.
\end{proof}

\begin{example}\label{f_3,2}
Consider $f_{3,2}(x)=1+x+x^8+x^{10}$.  The order of $f_{3,2}(x)$ is $4^3+2^3+1=73$.  The polynomial $f_{3,2}^*(x)$ has $\ell_1\left(f_{3,2}^*(x)\right)=4^3-3^3+2^3=45$, and $\beta\left(f_{3,2}(x)\right)=(45,28)$.  Explicitly,
\begin{align*}
f_{3,2}^*(x)=&x^{63}+x^{61}+x^{59}+x^{57}+x^{55}+x^{54}+x^{51}+x^{50}+x^{47}+x^{46}+x^{45}+
   x^{43}\\
  &+x^{42}+x^{41}+x^{39}+x^{38}+x^{37}+x^{36}+x^{31}+x^{30}+x^{29}+x^{2
   8}+x^{27}+x^{25}\\
  &+x^{23}+x^{22}+x^{21}+x^{20}+x^{19}+x^{18}+x^{15}+x^{14}+x
   ^{13}+x^{12}+x^{11}\\
  &+x^{10}+x^9+x^7+x^6+x^5+x^4+x^3+x^2+x+1.
\end{align*}
\end{example}

\begin{corollary}\label{Second Main Theorem Corollary}
The reciprocal polynomials $f_{(R),r,2}(x)=1+x^2+x^{2^r+1}+x^{2^r+2}$ are robust with order dividing $4^r+2^r+1$.
\end{corollary}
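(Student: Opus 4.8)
The plan is to mirror the proof of Corollary~\ref{maintheorem corollary}, deducing the statement directly from Theorem~\ref{Second Main Theorem} and Lemma~\ref{robustness of reciprocals}. The only new ingredient I must supply is the verification that $f_{(R),r,2}(x)$ really is the reciprocal polynomial of $f_{r,2}(x)$ in the sense of Definition~\ref{reciprocal polynomial}; once that identification is in place, everything else is inherited and no further computation is needed.

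First I would carry out the degree bookkeeping. Since $f_{r,2}(x)=1+x+x^{2^r}+x^{2^r+2}$ has degree $2^r+2$, Definition~\ref{reciprocal polynomial} gives
\[
\left(f_{r,2}\right)_{(R)}(x)=x^{2^r+2}f_{r,2}\!\left(\frac{1}{x}\right)=x^{2^r+2}\left(1+\frac{1}{x}+\frac{1}{x^{2^r}}+\frac{1}{x^{2^r+2}}\right)=1+x^2+x^{2^r+1}+x^{2^r+2},
\]
which is exactly the polynomial $f_{(R),r,2}(x)$ named in the statement. Thus $f_{(R),r,2}=\left(f_{r,2}\right)_{(R)}$, confirming that the corollary concerns the genuine reciprocal and not merely a polynomial with a similar-looking expression.

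With this identification, the rest is immediate. Lemma~\ref{robustness of reciprocals} yields $\beta(f_{r,2})=\beta\!\left(f_{(R),r,2}\right)$ and transfers robustness from $f_{r,2}$ to its reciprocal, while Theorem~\ref{reciprocal matches order} forces the two orders to coincide. Concretely, if $f_{r,2}$ has order $D$, then $f_{(R),r,2}$ has the same order $D$; since Theorem~\ref{Second Main Theorem} establishes that $f_{r,2}$ is robust with $D\mid 4^r+2^r+1$, the identical $D$ divides $4^r+2^r+1$ for the reciprocal as well, and robustness carries over verbatim. I do not expect any genuine obstacle here: the one place demanding care is the degree accounting in the reciprocal computation above, together with the observation that Theorem~\ref{reciprocal matches order} preserves the \emph{exact} order, so that "divides $4^r+2^r+1$" is inherited rather than weakened.
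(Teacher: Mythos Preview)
Your proposal is correct and follows essentially the same route as the paper: the paper's proof simply says the corollary follows immediately from Theorem~\ref{Second Main Theorem} and Lemma~\ref{robustness of reciprocals}. Your added verification that $\left(f_{r,2}\right)_{(R)}(x)=1+x^2+x^{2^r+1}+x^{2^r+2}$ is a detail the paper leaves implicit, and your separate invocation of Theorem~\ref{reciprocal matches order} is already absorbed into the proof of Lemma~\ref{robustness of reciprocals}, so nothing new is required beyond what the paper cites.
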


\begin{proof}
This follows immediately from Theorem \ref{Second Main Theorem} and Lemma \ref{robustness of reciprocals}.
\end{proof}

\begin{example}\label{f_(R),3,2}
Consider $f_{(R),3,2}(x)=1+x^2+x^9+x^{10}$.  The order of $f_{(R),3,2}(x)=4^3+2^3+1=73$.  The polynomial $f_{(R),3,2}^*(x)$ has $\ell_1\left(f_{(R),3,2}^*(x)\right)=4^3-3^3+2^3=45$, and $\beta\left(f_{(R),3,2}\right)=(45,28)$.
\end{example}

With Examples \ref{f_3,1}, \ref{f_(R),3,1}, \ref{f_3,2}, and \ref{f_(R),3,2}, we have accounted for all of the rectangular points in Figure \ref{PolynomialReciprocalDensity}.

In our search for robust polynomials, we have used Mathematica to check all polynomials of order less than or equal to 83, all quadrinomials of degree less than or equal to 18, all trinomials of degree less than or equal to 19, and all polynomials of degree less than or equal to 14.  

Table V-1 of \cite{Golomb} contains information, including orders, on trinomials of degree less than or equal to $36$.  We used this information on orders to obtain $\beta(f(x))$ for all trinomials $f(x)\in\mathbb{F}_2[x]$ with degree less than or equal to $19$.  There were only $4$ robust trinomials in this range, and they are given in Table \ref{robust trinomials table}.  Calculations became difficult for trinomials of higher degree because of the large amounts of time needed to run the code.

\begin{table}[h!]
\begin{center}
\begin{tabular}{l|l| l}
$f(x)$ &$\operatorname{ord}(f(x))$ &$\beta(f(x))$\\ \hline
&&\\
$1+x^3+x^{14}$ &$5115$ &$(2600,2515)$\\
$1+x^{11}+x^{14}$ &$5115$ &$(2600,2515)$\\
&&\\
$1+x^9+x^{19}$ &$174251$ &$(87136,87115)$\\
$1+x^{10}+x^{19}$ &$174251$ &$(87136,87115)$\\
&&\\
\end{tabular}
\caption{All robust trinomials of degree less than or equal to $19$}\label{robust trinomials table}
\end{center}
\end{table}

Of all the polynomials studied in these various methods, the most interesting ones remain the families described in this section, due to the large ratio of the first coordinate of $\beta(f(x))$ to the second coordinate and because those were the only cases in which we were able to take the specific examples we noticed in the data and generalize to entire families of robust polynomials.  Samples of the code used in these search methods, as well as tables containing information on all robust polynomials of order less than or equal to $83$ and a complete list of all robust quadrinomials of degree less than or equal to $18$, can be seen in the appendices of my dissertation, available online at website to be inserted.

\section{Applications to Generalized Binary Representations}\label{Applications}

Every non-negative integer $n$ has a unique standard binary representation and can be written as a sum of powers of $2$ in the form
\[
 n=\sum_{i=0}^{\infty}\epsilon_i2^i,\quad \epsilon_i\in\{0,1\}.
\]
If we let $f_{\{0,1\}}(n)$ denote the number of ways to write $n$ in this fashion, then $f_{\{0,1\}}(n)=1$ for all $n\geq0$, as shown by Euler \cite[pages ~277--8]{Euler standard binary rep}.

Now consider instead the coefficient set $\{0,1,2\}$ and let $f_{\{0,1,2\}}(n)$ denote the number of ways to write $n$ as
\[
  n=\sum_{i=0}^{\infty}\epsilon_i2^i,\quad \epsilon_i\in\{0,1,2\}.
\]
First note that while it is still possible to represent every non-negative integer in this fashion, the representation is no longer unique.  For example, there are three ways to write $n=4$ as $\sum\epsilon_i2^i$ with $\epsilon_i\in\{0,1,2\}$, and they are
\[
 4=2\cdot1+1\cdot2=0\cdot1+0\cdot2+1\cdot2^2=0\cdot1+2\cdot2.
\]
Reznick showed in \cite{Reznick Binary Partition Functions} that when taking coefficients from the set $\{0,1,2\}$, the number of representations of $n-1$ corresponds to the $n^{th}$ term of the Stern sequence, which is defined recursively by $s(2n)=s(n)$ and $s(2n+1)=s(n)+s(n+1)$ with initial values $s(0)=0$ and $s(1)=1$.  The Stern sequence can also be viewed as a diatomic array in which each row is formed by inserting the sum of consecutive terms between the terms of the previous row.  This diatomic array is symmetric and is like a Pascal's triangle with memory.  The first few rows of this infinite array are shown in Table 2.

\begin{table}[h]
\begin{center}
\renewcommand{\arraystretch}{3}
\resizebox{12.5cm}{!}{
\begin{tabular}{ccccccccccccccccccccccccccccccccc}
 & & & & & & & & & & & & & & & 1 &  & 1 & & & & & & & &  \\
 & & & & & & & & & & & & & & & 1 & 2 & 1 & & & & & & & &  \\
 & & & & & & & & & & & & & & 1 & 3 & 2 & 3 & 1 & & & & & & &  \\
 & & & & & & & & & & & & 1 & 4 & 3 & 5 & 2 & 5 & 3 & 4 & 1 & & & & &  \\
 & & & & & & & & 1 & 5 & 4 & 7 & 3 & 8 & 5 & 7 & 2 & 7 & 5 & 8 & 3 & 7 & 4 & 5 & 1 &  \\
 1 & 6 & 5 & 9 & 4 & 11 & 7 & 10 & 3 & 11 & 8 & 13 & 5 & 12 & 7 & 9 & 2 & 9 & 7 & 12 & 5 & 13 & 8 & 11 & 3 & 10 & 7 & 11 & 4 & 9 & 5 & 6 & 1\\
\ldots  & 14 & 11 & 19 & 8 & 21 & 13 & 18 & 5 & 17 & 12 & 19 & 7 & 16 & 9 & 11 & 2 & 11 & 9 & 16 & 7 & 19 & 12 & 17 & 5 & 18 & 13 & 21 & 8 & 19 & 11 & 14 & \ldots\\
 \end{tabular}
}
 \end{center}\label{Stern array}\caption{Stern diatomic array}
\end{table}

To generalize these ideas, let $\mathcal{A}=\{0=a_0<a_1<\cdots<a_j\}$ denote a finite subset of $\mathbb{N}$ containing $0$.  We must include $0$ to avoid summing infinitely many powers of $2$.  Let $f_\mathcal{A}(n)$ denote the number of ways to write $n$ in the form
\[
 n=\sum_{i=0}^{\infty}\epsilon_i2^i,\quad \epsilon_i\in\mathcal{A}.
\]

We associate to $\mathcal{A}$ its characteristic function $\chi_\mathcal{A}(n)$. The generating function for $\chi_\mathcal{A}(n)$ is
\[
 \phi_\mathcal{A}(x):=\sum_{n=0}^{\infty}\chi_\mathcal{A}(n)x^n=\sum_{a\in\mathcal{A}}x^a=1+x^{a_1}+\cdots+x^{a_j}.
\]
Since $\mathcal{A}$ is a finite set, $\phi_\mathcal{A}(x)$ is a polynomial in $\mathbb{F}_2[x]$.  For example, we return to the specific cases discussed earlier and see that $\phi_{\{0,1\}}(x)=1+x$ and $\phi_{\{0,1,2\}}(x)=1+x+x^2$.

Denote the generating function of $f_\mathcal{A}(n)$ by
\[
F_\mathcal{A}(x):=\sum_{n=0}^{\infty}f_\mathcal{A}(n)x^n.
\]
Then 
\[
F_{\{0,1\}}(x)=\sum_{n=0}^{\infty}x^n=1+x+x^2+\cdots.
\]
If we view the number of ways to write $n$ as a partition problem, we obtain a product representation for $F_\mathcal{A}(x)$ as
\begin{equation}\label{product representation for F}
F_{\mathcal{A}}(x)=\prod_{k=0}^\infty\left(1+x^{a_12^k}+\cdots+x^{a_j2^k}\right)=\prod_{k=0}^\infty \phi_{\mathcal{A}}(x^{2^k}).
\end{equation}

In \cite{4 paper}, Anders, Dennison, Lansing, and Reznick studied the behavior of the sequence $(f_{\mathcal{A}}(n))\bmod 2$ and proved the following theorem.  

\begin{theorem}[{\cite[1.1]{4 paper}}]\label{Our Main Theorem}
As elements of the formal power series ring $\mathbb{F}_2[[x]]$,
\[
 \phi_\mathcal{A}(x)F_\mathcal{A}(x)=1.
\]
Hence $F_{\mathcal{A}}(x)\in\mathbb{F}_2(x)$.
\end{theorem}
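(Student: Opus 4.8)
The plan is to exploit the product representation \eqref{product representation for F} together with the characteristic-$2$ Frobenius identity, turning the claimed equality into a simple fixed-point computation in the ring $\mathbb{F}_2[[x]]$. Before any manipulation I would record that the infinite product $\prod_{k=0}^\infty \phi_\mathcal{A}(x^{2^k})$ genuinely converges in $\mathbb{F}_2[[x]]$: since $a_1 \geq 1$, the $k$-th factor $\phi_\mathcal{A}(x^{2^k}) = 1 + x^{a_1 2^k} + \cdots + x^{a_j 2^k}$ differs from $1$ only in degrees $\geq 2^k$, so for each fixed $n$ all but finitely many factors act as the identity on the coefficient of $x^n$. This legitimizes treating $F_\mathcal{A}(x)$ as a well-defined element of $\mathbb{F}_2[[x]]$ and manipulating the product factorwise.

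Next I would derive a functional equation for $F_\mathcal{A}$. Splitting off the $k=0$ factor and reindexing the tail by $j = k-1$ gives
\[
F_\mathcal{A}(x) = \phi_\mathcal{A}(x)\prod_{k=1}^\infty \phi_\mathcal{A}(x^{2^k}) = \phi_\mathcal{A}(x)\prod_{j=0}^\infty \phi_\mathcal{A}\bigl((x^2)^{2^j}\bigr) = \phi_\mathcal{A}(x)\,F_\mathcal{A}(x^2).
\]
The key input is then the Frobenius identity of Equation \eqref{nice exponents}, extended from $\mathbb{F}_2[x]$ to $\mathbb{F}_2[[x]]$: because squaring is additive in characteristic $2$ and the coefficients lie in $\mathbb{F}_2$, one has $F_\mathcal{A}(x)^2 = \sum_n f_\mathcal{A}(n)\,x^{2n} = F_\mathcal{A}(x^2)$. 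Substituting this into the functional equation yields
\[
F_\mathcal{A}(x) = \phi_\mathcal{A}(x)\,F_\mathcal{A}(x)^2.
\]

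To finish I would rearrange this as $F_\mathcal{A}(x)\bigl(1 + \phi_\mathcal{A}(x)F_\mathcal{A}(x)\bigr) = 0$ in $\mathbb{F}_2[[x]]$, where the sign comes for free from characteristic $2$. Since $f_\mathcal{A}(0) = 1$, the series $F_\mathcal{A}(x)$ has nonzero constant term and is therefore a unit, hence not a zero divisor; cancelling it leaves $1 + \phi_\mathcal{A}(x)F_\mathcal{A}(x) = 0$, that is, $\phi_\mathcal{A}(x)F_\mathcal{A}(x) = 1$. The final clause $F_\mathcal{A}(x) \in \mathbb{F}_2(x)$ is then immediate, since $\phi_\mathcal{A}(x)$ is a polynomial and $F_\mathcal{A}(x) = 1/\phi_\mathcal{A}(x)$.

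I expect the only real subtlety to lie in the passage to the completed ring $\mathbb{F}_2[[x]]$: one must verify the convergence of the defining product and, above all, justify the Frobenius identity $F_\mathcal{A}(x^2) = F_\mathcal{A}(x)^2$ at the level of formal power series rather than polynomials, where Equation \eqref{nice exponents} was stated. Both are routine termwise arguments, but they are what makes the fixed-point cancellation valid. As an independent check one can avoid the functional equation entirely and instead telescope the partial products directly: using $\phi_\mathcal{A}(x^{2^k}) = \phi_\mathcal{A}(x)^{2^k}$ gives $\phi_\mathcal{A}(x)\prod_{k=0}^{N}\phi_\mathcal{A}(x^{2^k}) = \phi_\mathcal{A}(x)^{2^{N+1}} = \phi_\mathcal{A}(x^{2^{N+1}})$, whose nonconstant terms have degree $\geq a_1 2^{N+1} \to \infty$, so the right-hand side tends to $1$ in $\mathbb{F}_2[[x]]$ and the product equals $1$ in the limit.
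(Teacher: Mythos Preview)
Your argument is correct. Both the functional-equation route and the telescoping alternative you sketch at the end are valid, and the care you take with convergence of the product in $\mathbb{F}_2[[x]]$ and with extending the Frobenius identity from polynomials to power series is exactly what is needed.

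However, there is nothing to compare against: the paper does not give its own proof of this theorem. It is quoted as Theorem~1.1 of \cite{4 paper} and stated without proof. So your proposal is not an alternative to the paper's argument but rather a self-contained proof supplied where the paper chose to cite the literature. If anything, your telescoping check is the most direct version: using $\phi_\mathcal{A}(x^{2^k})=\phi_\mathcal{A}(x)^{2^k}$ one gets $\phi_\mathcal{A}(x)\prod_{k=0}^{N}\phi_\mathcal{A}(x^{2^k})=\phi_\mathcal{A}(x)^{2^{N+1}}=\phi_\mathcal{A}(x^{2^{N+1}})\to 1$, which already establishes the identity without passing through the fixed-point cancellation.
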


Returning to the coefficient set $\{0,1,2\}$ with $\phi_{\{0,1,2\}}(x)=1+x+x^2$, we see by Theorem \ref{Our Main Theorem} that in $\mathbb{F}_2[[x]]$,
\begin{align*}
F_{\{0,1,2\}}(x)&=\frac{1}{1+x+x^2}\\
&=\frac{1+x}{1+x^3}\\
&=(1+x)(1+x^3+x^6+\cdots)\\
&=1+x+x^3+x^4+x^6+x^7+\cdots.
\end{align*}

Since $\mathcal{A}$ is finite, $\phi_{\mathcal{A}}(x)$ is a polynomial in $\mathbb{F}_2[x]$.  Also recall that the \textit{order} of $\phi_\mathcal{A}$ is the smallest integer $D$ such that $\phi_{\mathcal{A}}(x)\mid 1+x^D$.  Define $\phi_{\mathcal{A}}^*(x)$ by
\[
 \phi_{\mathcal{A}}(x)\phi_{\mathcal{A}}^*(x)=1+x^D.
\]
In coding theory, if $\operatorname{deg}\left( \phi_{\mathcal{A}}\right)=d$ and $D=2^d-1$, $\phi_\mathcal{A}(x)$ is called the \textit{generator polynomial}, while $\phi_\mathcal{A}^*(x)$ is called the \textit{parity-check polynomial}, \cite[page 484]{Lidl&N}.  We do not pursue these here.

Now we have in $\mathbb{F}_2[x]$,
\[
 F_{\mathcal{A}}(x)=\frac{1}{\phi_{\mathcal{A}}(x)}=\frac{\phi_{\mathcal{A}}^*(x)}{1+x^D}.
\]
If $\phi_{\mathcal{A}}^*(x)=\sum_{i=0}^r x^{b_i}$, where $0=b_0<b_1<\dots<b_r=D-\operatorname{max}\{a_i\}$, then
\[
 f_{\mathcal{A}}(n)\equiv 1\bmod 2 \Longleftrightarrow n\equiv b_i\bmod D \text{ for some } i.
\]

Cooper, Eichhorn, and O'Bryant considered the fraction $\gamma(f(x))$ defined in, as did we in \cite{4 paper}, but here we have instead considered the ordered pair
\begin{equation}\label{beta definition}
\beta(f(x))=(\ell_1(f^*), \ell_{0,D-1}(f^*)),
\end{equation}
which gives more precise information than reduced fractions.  In this pair, the first coordinate represents the number of times $f_\mathcal{A}(n)$ is odd in a minimal period $D$, and the second coordinate represents the number of times $f_\mathcal{A}(n)$ is even in a minimal period.

In light of the definitions presented in this section, we can restate Theorems \ref{maintheorem} and \ref{Second Main Theorem}, respectively, as follows.

\begin{theorem*}
Let $\mathcal{A}_r=\{0,1,2^r-1, 2^r+1\}$.  Then $\phi_{\mathcal{A}_r}(x)=f_{r,1}(x)$ and the sequence $\left(f_{\mathcal{A}_r}(n)\right) \bmod 2$ is periodic with least period dividing $4^r-1$.  Among $4^r-1$ consecutive terms of $\left(f_{\mathcal{A}_r}(n)\right)$, $4^r-3^r$ terms are odd and $3^r-1$ terms are even.
\end{theorem*}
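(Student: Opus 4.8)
The plan is to recognize this statement as a dictionary translation of Theorem \ref{maintheorem} into the language of generalized binary representations, with Theorem \ref{Our Main Theorem} serving as the bridge. First I would observe that, directly from the definition of the characteristic generating polynomial, $\phi_{\mathcal{A}_r}(x)=1+x^0+x^{2^r-1}+x^{2^r+1}=1+x+x^{2^r-1}+x^{2^r+1}$, which is exactly $f_{r,1}(x)$. This disposes of the opening claim by inspection, with no computation required.

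Next I would invoke Theorem \ref{Our Main Theorem} to conclude that, as elements of $\mathbb{F}_2[[x]]$,
\[
F_{\mathcal{A}_r}(x)=\frac{1}{\phi_{\mathcal{A}_r}(x)}=\frac{1}{f_{r,1}(x)}.
\]
By part (\ref{order 1}) of Theorem \ref{maintheorem}, the order of $f_{r,1}(x)$ divides $4^r-1$, so $f_{r,1}(x)\mid 1+x^{4^r-1}$, and I may write $F_{\mathcal{A}_r}(x)=h_{r,1}(x)/(1+x^{4^r-1})$, where $h_{r,1}(x)=f_{r,1}^*$ is precisely the polynomial dissected in the theorem. Expanding $1/(1+x^{4^r-1})$ as the geometric series $\sum_{k\geq 0}x^{k(4^r-1)}$ shows that the coefficient of $x^n$ in $F_{\mathcal{A}_r}(x)$, namely $f_{\mathcal{A}_r}(n)\bmod 2$, depends only on the residue of $n$ modulo $4^r-1$. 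This gives periodicity with period $4^r-1$, whence the least period divides $4^r-1$.

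Finally, for the tally I would use that $\operatorname{deg} h_{r,1}(x)\leq 4^r-2$, so that over the window $n\in\{0,1,\dots,4^r-2\}$ the odd values of $f_{\mathcal{A}_r}(n)$ occur exactly at the exponents appearing in $h_{r,1}(x)$. Hence the number of odd terms in one period equals $\ell_1(h_{r,1}(x))$ and the number of even terms equals $\ell_{0,4^r-2}(h_{r,1}(x))$. Parts (\ref{length 1}) and (\ref{robust 1}) of Theorem \ref{maintheorem} supply $\ell_1(h_{r,1}(x))=c_{r,1}=4^r-3^r$ and $\ell_{0,4^r-2}(h_{r,1}(x))=d_{r,1}=3^r-1$, which is the asserted count.

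The argument is essentially routine once the translation is set up; the one point I would treat most carefully is the bookkeeping that counting over a window of length \emph{exactly} $4^r-1$ (rather than over a window of length equal to the possibly smaller exact order $D$) still returns $\ell_1(h_{r,1}(x))$. This is secured by viewing $h_{r,1}(x)$ as a polynomial occupying $4^r-1$ coefficient slots, degrees $0$ through $4^r-2$, of which the top $2^r-1$ vanish, exactly as arranged in the proof of Theorem \ref{maintheorem}; and Lemma \ref{exact order unnecessary} guarantees that the proportion of odd to even terms is unchanged even when $D$ is a proper divisor of $4^r-1$.
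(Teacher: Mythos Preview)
Your proposal is correct and follows exactly the route the paper intends: the paper presents this theorem explicitly as a restatement of Theorem~\ref{maintheorem} via the dictionary of Section~\ref{Applications} (in particular Theorem~\ref{Our Main Theorem}), and you have simply supplied the details of that translation. One small typo: in your first displayed identification you wrote $\phi_{\mathcal{A}_r}(x)=1+x^{0}+x^{2^r-1}+x^{2^r+1}$, where the $x^{0}$ should be $x^{1}$; the final expression $1+x+x^{2^r-1}+x^{2^r+1}$ is of course what you meant.
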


\begin{theorem*}
Let $\mathcal{A}_r=\{0,1,2^r, 2^r+2\}$.  Then $\phi_{\mathcal{A}_r}(x)=f_{r,2}(x)$ and the sequence $\left(f_{\mathcal{A}_r}(n)\right) \bmod 2$ is periodic with least period dividing $4^r+2^r+1$.  Among $4^r+2^r+1$ consecutive terms of $\left(f_{\mathcal{A}_r}(n)\right)$, $4^r-3^r+2^r$ terms are odd and $3^r+1$ terms are even.
\end{theorem*}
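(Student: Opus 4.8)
The plan is to recognize this statement as a dictionary translation of Theorem \ref{Second Main Theorem}, with essentially all of the analytic content already supplied there; the remaining task is to rephrase the three assertions of that theorem in the language of the representation function $f_{\mathcal{A}_r}$. First I would verify the identification $\phi_{\mathcal{A}_r}(x)=f_{r,2}(x)$, which is immediate from the definition of the characteristic generating function: the set $\mathcal{A}_r=\{0,1,2^r,2^r+2\}$ gives $\phi_{\mathcal{A}_r}(x)=\sum_{a\in\mathcal{A}_r}x^a=1+x+x^{2^r}+x^{2^r+2}$, which is exactly $f_{r,2}(x)$.

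Next I would establish periodicity together with the stated bound on the least period. By Theorem \ref{Our Main Theorem}, $F_{\mathcal{A}_r}(x)=1/\phi_{\mathcal{A}_r}(x)$ in $\mathbb{F}_2(x)$; writing $D$ for the order of $\phi_{\mathcal{A}_r}$ we have $\phi_{\mathcal{A}_r}(x)\mid 1+x^D$, so $F_{\mathcal{A}_r}(x)=h_{r,2}(x)/(1+x^D)$ with $\operatorname{deg}(h_{r,2})<D$. This makes the coefficient sequence $(f_{\mathcal{A}_r}(n))\bmod 2$ purely periodic with period $D$, so its least period divides $D$; by Theorem \ref{Second Main Theorem}(\ref{order 2}) we have $D\mid 4^r+2^r+1$, and hence the least period divides $4^r+2^r+1$.

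The counting step is where I would invoke parts (\ref{length 2}) and (\ref{robust 2}). Setting $N:=4^r+2^r+1$, the order divides $N$, so $f_{r,2}(x)\mid 1+x^N$ and $F_{\mathcal{A}_r}(x)=h_{r,2}(x)(1+x^N+x^{2N}+\cdots)$. Since $\operatorname{deg}(h_{r,2})=N-\operatorname{deg}(f_{r,2})=4^r-1<N$, the coefficients of $x^0,\dots,x^{N-1}$ in $F_{\mathcal{A}_r}(x)$ are precisely the coefficients of $h_{r,2}(x)$ read as a polynomial of degree $N-1$. Therefore, among $f_{\mathcal{A}_r}(0),\dots,f_{\mathcal{A}_r}(N-1)$, the number that are odd is $\ell_1(h_{r,2})$ and the number that are even is $\ell_{0,N-1}(h_{r,2})$, i.e. the two coordinates of $\beta_N(f_{r,2})=(c_{r,2},d_{r,2})=(4^r-3^r+2^r,\,3^r+1)$. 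Finally, because $N$ is a multiple of the least period, any window of $N$ consecutive terms carries the same multiset of residues, so these counts hold for every block of $N$ consecutive terms, not merely the initial one.

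The bookkeeping I would watch most carefully is the alignment between the combinatorial quantity $\beta_N$ and the analytic count of odd values: this requires $N$ to be a genuine multiple of the order, so that $f_{r,2}^*$ is computed against $1+x^N$ and $h_{r,2}$ has degree strictly below $N$, which is exactly what part (\ref{order 2}) guarantees. Beyond securing this alignment there is no real obstacle here, since the structural work—the explicit factorization of $g_{r,2}$, the no-overlap argument for $S_{r,2}$, and the Glaisher/Lemma \ref{Glaisher's Theorem} evaluation yielding $\ell_1(h_{r,2})=c_{r,2}$—has already been carried out in the proof of Theorem \ref{Second Main Theorem}.
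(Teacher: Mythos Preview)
Your proposal is correct and matches the paper's approach: the paper presents this theorem explicitly as a restatement of Theorem~\ref{Second Main Theorem} in the language of Section~\ref{Applications}, without giving a separate proof. Your write-up is in fact more thorough than the paper's, carefully spelling out the dictionary via Theorem~\ref{Our Main Theorem} and the identification of the odd/even counts with the coordinates of $\beta_{4^r+2^r+1}(f_{r,2})$.
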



\section{Open Questions}\label{Open Questions}
In this section, we discuss open questions relating to the problems, theorems, and examples in Section \ref{Families of Robust Polynomials}.

The original statement by Cooper, Eichhorn, and O'Bryant in \cite{KOB paper} was, ``The most interesting issued raised in this section, which remains unanswered, is to describe the set $\{\gamma\left(P\right): P \text{ is a polynomial}\}$.  For example, is there an $n$ with $\gamma\left(P_n\right)=3/4$?''  It is trivial that the infimum of the set is $0$, and we saw in Section \ref{Families of Robust Polynomials} that the supremum of the set is $1$.  The cluster points of the set remain to be determined, as does whether or not $3/4$ belongs to the set.

We would like to show that $4^r-1$ is in fact the order of the robust polynomials $f_{r,1}$ of Theorem \ref{maintheorem} and their reciprocals $f_{(R),r,1}$ of Corollary \ref{maintheorem corollary} rather than a multiple of the order, which is the result we now have.  Similarly, we hope to show that $4^r+2^r+1$ is the exact order of the robust polynomials $f_{r,2}$ and $f_{(R),r,2}$ of Theorem \ref{Second Main Theorem} and Corollary \ref{Second Main Theorem Corollary}.  

We would also like to find more families of robust polynomials.  It seems that the best way to do this would be to proceed as before, collecting large amounts of data and working to generalize the specific robust polynomials found in that data.  More efficient computing and coding will be needed, however, to obtain more data.

Another open problem is to consider properties of $f_\mathcal{A}(n)$ in bases other than $2$.  Calculations of sequences $\left(f_\mathcal{A}(n)\right)\bmod 3$ for $\mathcal{A}=\{0,1,3\},\{0,2,3\},\{0,1,4,9\}$, $\{0,1,5,9,10\},\{0,2,3,4\}, \{0,1,2,\dots,2^j\}$ for $2\leq j\leq 6$, and $\{0,1,3,\ldots,3^j\}$ for $2\leq j\leq 4$ showed no immediately obvious periodicity properties.  We also considered the sequence $\left(f_{\{0,2,8,9\}}(n)\right)$ mod $3,4,5,6,7\text{, and }8$ but noticed no periodicities.

If $f(x)=x^k+a_{k-1}x^{k-1}+a_{k-2}x^{k-2}+\cdots+a_1x+a_0$ with all $a_i\in\mathbb{F}_2$ and $a_0=1$ and $\mathcal{A}=\{j: 1\leq j\leq k\text{ and }a_j\neq 0\}$ , Theorem 8.78 of \cite{Lidl&N} gives 
\begin{align*}
&\left|\text{difference between \#0's and \#1's in a cycle of length $\operatorname{ord}(f(x))$ of }f_\mathcal{A}(n)\right|\\
&\quad =\left|\ell_1\left(f^*(x)\right)-\ell_{0,M-1}\left(f^*(x)\right)\right|\\
&\quad \leq 2^{k/2}.
\end{align*}

Hence $2^{k/2}$ is an upper bound for the difference in the coordinates of $\beta(f(x))$ for any $f(x)\in\mathbb{F}_2[x]$ of degree $k$ with constant term $1$.

The most extreme robust examples found thus far are those given in Examples \ref{f_3,1}, \ref{f_(R),3,1}, \ref{f_3,2}, and \ref{f_(R),3,2}.  Recall $\beta\left(f_{3,1}(x)\right)=\beta\left(f_{(R),3,1}(x)\right)=(37,26)$ and $\operatorname{deg}\left(f_{3,1}(x)\right)=\operatorname{deg}\left(f_{(R),3,1}(x)\right)=9$.  Additionally, $\beta\left(f_{3,2}(x)\right)=\beta\left(f_{(R),3,2}(x)\right)=(45,28)$ and $\operatorname{deg}\left(f_{3,2}(x)\right)=\operatorname{deg}\left(f_{(R),3,2}(x)\right)=10$.  Using the bound from \cite{Lidl&N}, we have
\begin{align*}
 \left|\ell_1\left(f_{3,1}^*(x)\right)-\ell_{0,62}\left(f_{3,1}^*(x)\right)\right|&=\left|\ell_1\left(f_{(R),3,1}^*(x)\right)-\ell_{0,62}\left(f_{(R),3,1}^*(x)\right)\right|\\
  &=37-26=11\leq 2^{9/2}\approx 22.6
\end{align*}
and
\begin{align*}
 \left|\ell_1\left(f_{3,2}^*(x)\right)-\ell_{0,72}\left(f_{3,2}^*(x)\right)\right|&=\left|\ell_1\left(f_{(R),3,2}^*(x)\right)-\ell_{0,72}\left(f_{(R),3,2}^*(x)\right)\right|\\
  &=45-28=17\leq 2^{10/2}=32.
\end{align*}

For any $r\geq 3$, if we assume that $4^r-1$ and $4^r+2^r+1$ are the exact orders of $f_{r,1}$ and $f_{r,2}$, respectively, we have
\begin{align*}
\left|\ell_1\left(f_{r,1}^*(x)\right)-\ell_{0,4^r-2}\left(f_{r,1}^*(x)\right)\right|&=\left|\ell_1\left(f_{(R),r,1}^*(x)\right)-\ell_{0,4^r-2}\left(f_{(R),r,1}^*(x)\right)\right|\\
&=4^r-3^r-(3^r-1)\\
&=4^r-2\cdot3^r+1\\
&\ll 2^{\frac{1}{2}(2^r+1)}\\
&=4^{2^{r-2}+\frac{1}{4}}
\end{align*}
and
\begin{align*}
\left|\ell_1\left(f_{r,2}^*(x)\right)-\ell_{0,4^r+2^r}\left(f_{r,2}^*(x)\right)\right|&=\left|\ell_1\left(f_{(R),r,2}^*(x)\right)-\ell_{0,4^r+2^r}\left(f_{(R),r,2}^*(x)\right)\right|\\
&=4^r-3^r+2^r-(3^r+1)\\
&=4^r-2\cdot3^r+2^r-1\\
&\ll 2^{\frac{1}{2}(2^r+2)}\\
&=4^{2^{r-2}+\frac{1}{2}},
\end{align*}
where the penultimate expressions in both displayed equations come from the upper bound in \cite{Lidl&N}.

Since these are the most extreme examples but do not push the upper bound, we suspect that the bound of $2^{k/2}$ could be improved in the $\mathbb{F}_2$ case.

\end{document}